\newtheorem{theorem}{Theorem}[section]
\newtheorem{lemma}{Lemma}[section]
\newtheorem{proposition}{Proposition}[section]
\theoremstyle{remark}
\newtheorem{remark}{Remark}[section]
\numberwithin{equation}{section}
\newcommand{\restr}{\lfloor}
\newcommand\eps\varepsilon
\newcommand{\weakto}{\rightharpoonup}
\newcommand{\R}{\mathbb R}
\newcommand{\s}{\cdot}
\newcommand{\beq}{\begin{equation}}
\newcommand{\eeq}{\end{equation}}
\newcommand{\barra}[1]{\overline{#1}}
\begin{document}
\title{\textbf{On a $(p,q)$-Laplacian problem with parametric concave term and asymmetric perturbation}}
\author{
\bf Salvatore A. Marano\thanks{Corresponding Author}, Sunra
J.N. Mosconi\\
\small{Dipartimento di Matematica e Informatica,
Universit\`a degli Studi di Catania,}\\
\small{Viale A. Doria 6, 95125 Catania, Italy}\\
\small{\it E-mail: marano@dmi.unict.it, mosconi@dmi.unict.it}\\
\mbox{}\\
\bf Nikolaos S. Papageorgiou\\
\small{Department of Mathematics, National Technical University of Athens,}\\
\small{Zografou Campus, 15780 Athens, Greece}\\
\small{\it E-mail: npapg@math.ntua.gr}
}
\date{}
\maketitle
\begin{abstract}
A Dirichlet problem driven by the $(p, q)$-Laplace operator and an asymmetric concave reaction with positive parameter is investigated. Four nontrivial smooth solutions (two positive, one negative, and the remaining nodal) are obtained once the parameter turns out to be sufficiently small. Under a oddness condition near the origin for the perturbation, a whole sequence of sign-changing solutions, which converges to zero, is produced.
\end{abstract}
\vspace{2ex}
\noindent\textbf{Keywords:} $(p,q)$-Laplacian, asymmetric perturbation, concave term, extremal constant-sign and nodal solution.
\vspace{2ex}

\noindent\textbf{AMS Subject Classification:} 35J20, 35J92, 58E05.
\section{Introduction}
Let $\Omega$ be a bounded domain in $\R^N$ with a $C^2$-boundary $\partial\Omega$, let $1<s<q\leq p<+\infty$, and let $\mu\in\R^+_0$. Consider the Dirichlet problem
\begin{equation}\label{problema}
\left\{
\begin{array}{ll}
-\Delta_p u-\mu \Delta_q u=\lambda |u|^{s-2}u+f(x, u)\quad & \mbox{in}\quad\Omega,\\
u=0\quad & \mbox{on}\quad\partial\Omega,\\
\end{array}
\right.
\end{equation}
where $\Delta_r$, $r>1$, denotes the $r$-Laplace operator, namely
$$\Delta_r u:={\rm div}(|\nabla u|^{r-2}\nabla u)\quad\forall\, u\in W^{1,r}_0(\Omega),$$
$p=q$ iff $\mu=0$, $\lambda$ is a real parameter, while $f:\Omega\times\R\to\R$ satisfies Carath\'{e}odory's conditions.

The non-homogeneous differential operator $Au:=\Delta_p u+\Delta _q u$ that appears in \eqref{problema} is usually called $(p,q)$-Laplacian. It stems from a wide range of important applications, including biophysics \cite{Fi}, plasma physics \cite{Wil}, reaction-diffusion equations \cite{Ar,CI}, as well as models of elementary particles \cite{BDFP,BFP,De}.

This paper treats the existence of multiple solutions, with a precise sign information, to \eqref{problema} when, roughly speaking,
\begin{itemize}
\item[1)] $\lambda>0$ is suitably small, and
\item[2)] $t\mapsto f(x,t)$ exhibits an asymmetric behavior as $t$ goes from $-\infty$ to $+\infty$.
\end{itemize}
We will assume that, for an appropriate constant $C>0$,
$$-C\leq\liminf_{t\to-\infty}\frac{f(x,t)}{|t|^{p-2}t}\leq\limsup_{t\to-\infty}\frac{f(x,t)}{|t|^{p-2}t}\leq\lambda_{1,p}\leq\liminf_{t\to+\infty}\frac{f(x,t)}{t^{p-1}}\leq\limsup_{t\to+\infty}\frac{f(x,t)}{t^{p-1}}\leq C$$
uniformly in $x\in\Omega$, where $\lambda_{1,p}$ indicates the first eigenvalue of $(-\Delta_p, W^{1,p}_0(\Omega))$. Hence,  $f(x,\cdot)$ grows $(p-1)$-linearly at $\pm\infty$ and only a partial interaction with $\lambda_{1,p}$ is allowed (nonuniform non-resonance).

Since $s<q\leq p$, the term $t\mapsto\lambda|t|^{s-2}t$ represents a parametric `concave' contribution inside the reaction of \eqref{problema}. 

Under 1), 2), and a further hypothesis involving the rate of $f(x,\cdot)$ near zero, Problem \eqref{problema} admits four nontrivial $C^1_0(\overline{\Omega})$-solutions, two positive, one negative, and the remaining nodal; see Theorem \ref{foursol}. If, moreover, $t\mapsto f(x,t)$ turns out to be odd in a neighborhood of zero then there exists a whole sequence $\{u_n\}$ of nodal solutions such that $u_n\to 0$ in $C^1(\overline{\Omega})$; cf. Theorem \ref{seqsol}.

The adopted approach exploits variational methods, truncation techniques, as well as results from Morse theory. Regularity is a standard matter.

Many recent papers have been devoted to elliptic problems with either 
\begin{itemize}
\item $p$-Laplacian and asymmetric nonlinearity (see, e.g., \cite{DMP,MaPa3,MaPa4,MaPa5,MoMoPa1} and the references therein), or
\item $(p,q)$-Laplacian and symmetric reaction (see for instance \cite{CMP,MMP,MaPa2} and the references given there).
\end{itemize}
On the contrary, to the best of our knowledge, few articles treat equations driven by the $(p,q)$-Laplace operator and an asymmetric nonlinearity. Actually, we can only mention \cite{PaRa}, where $\mu:=1$, $q:=2$, the parametric concave term does not appear, $f$ satisfies somewhat different assumptions, and a complete sign information on the solutions is not performed. A wider bibliography on these topics can be found in the survey paper \cite{MM}.
\section{Preliminaries}
Let $(X,\|\cdot\|)$ be a real Banach space and let $X^*$ be its topological dual, with duality bracket $\langle\cdot, \cdot\rangle$. An operator $A:X\to X^*$ is called of type $({\rm S})_+$ provided
$$x_n\rightharpoonup x\quad\text{in X,}\quad\limsup_{n\to+\infty}\langle A(x_n),x_n-x\rangle\leq 0\quad \implies\quad
x_n\to x\quad\text{in X.}$$
For $\varphi\in C^1(X)$ and $c\in \R$, put
\[
\varphi^c:=\{x\in X:\varphi(x)\leq c\},\quad K_\varphi:=\{x\in X:\varphi'(x)=0\},\quad
K^c_\varphi:=\{x\in K_\varphi:\varphi(x)=c\}.
\]
Given an isolated critical point $\bar x\in K^c_\varphi$, we define  the $k$-th critical group of $\varphi$ at $\bar x$ as
\[
C_k(\varphi, \bar x):=H_k(\varphi^c\cap U,\varphi^c \cap U\setminus\{\bar x\}),\quad k\in\mathbb{N}_0,
\]
where $U$ is any neighborhood of $\bar x$ such that $K_\varphi\cap\varphi^c\cap U=\{\bar x\}$ and $H_k(A, B)$ denotes the
$k$-th relative singular homology group for the pair $(A, B)$ with integer coefficients. The excision property of singular homology ensures that this definition does not depend on the choice of $U$; see \cite{MoMoPa2} for details.

We say that $\varphi$ satisfies the Cerami condition when
\begin{itemize}
\item[(C)]{\em Every sequence $\{x_n\}\subseteq X$ such that $\{\varphi(x_n)\}$ is bounded and $(1+\Vert x_n\Vert)\varphi'(x_n)\to 0$ in $X^*$ admits a strongly convergent subsequence.}
\end{itemize}
The following version \cite{MoMoPa2} of the mountain pass theorem will be employed.
\begin{theorem}\label{mpth}
If $\varphi\in C^1(X)$ satisfies $(C)$, $x_0, x_1\in X$, $0<\rho<\|x_0-x_1\|$,
\[
\max\{\varphi(x_0), \varphi(x_1)\}<m_\rho:=\inf_{\|x-x_0\|=\rho}\varphi(x),
\]
and
\[
c:=\inf_{\gamma\in\Gamma}\max_{t\in[0,1]}\varphi(\gamma(t)),\quad\text{where}\quad\Gamma:=\{\gamma\in C^0([0, 1], X):\;
\gamma(0)=x_0, \gamma(1)=x_1\},
\]
then: $m_\rho\le c$; $K^c_\varphi$ is nonempty; $C_1(\varphi, \bar x)\neq 0$ provided $\bar x\in K_\varphi^c$ turns out to be isolated.
\end{theorem}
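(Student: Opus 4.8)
The plan is to dispatch the three assertions in order, the first two being classical and the third carrying the real content.

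For $m_\rho\le c$ I would first record the elementary geometric fact that every $\gamma\in\Gamma$ meets the sphere $S:=\{x:\|x-x_0\|=\rho\}$. Indeed, $t\mapsto\|\gamma(t)-x_0\|$ is continuous, vanishes at $t=0$, and equals $\|x_1-x_0\|>\rho$ at $t=1$, so the intermediate value theorem yields $t_\gamma\in(0,1)$ with $\gamma(t_\gamma)\in S$. Hence $\max_t\varphi(\gamma(t))\ge\varphi(\gamma(t_\gamma))\ge m_\rho$, and taking the infimum over $\Gamma$ gives $c\ge m_\rho$. In particular $c\ge m_\rho>\max\{\varphi(x_0),\varphi(x_1)\}$, so $c$ is a genuine pass level, strictly above the two endpoints.

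For $K^c_\varphi\ne\vuoto$ I would argue by contradiction, assuming $c$ is a regular value. The essential tool is a deformation lemma valid under the Cerami condition (this is exactly where $(C)$ enters, replacing the more familiar Palais--Smale setting): since $K^c_\varphi=\vuoto$, there are $\eps>0$ and a continuous $\eta\colon[0,1]\times X\to X$ with $\eta(1,\varphi^{c+\eps})\subseteq\varphi^{c-\eps}$ and $\eta(t,\cdot)$ fixing $\varphi^{c-\eps}$. Shrinking $\eps$ so that $\varphi(x_0),\varphi(x_1)<c-\eps$ and choosing a near-optimal $\gamma\in\Gamma$ with $\max_t\varphi(\gamma(t))<c+\eps$, the composite $t\mapsto\eta(1,\gamma(t))$ is again admissible and satisfies $\max_t\varphi\le c-\eps$, contradicting the definition of $c$. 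Hence $K^c_\varphi\ne\vuoto$.

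For the critical-group statement, suppose $\bar x\in K^c_\varphi$ is isolated; by $(C)$ the set $K^c_\varphi$ is compact, so I may reduce to the case in which it is finite (and, in the cleanest situation, $K^c_\varphi=\{\bar x\}$), there being no accumulation of critical values at $c$. The heart of the argument is homological. Fix $\eps>0$ small enough that $c$ is the only critical value in $[c-\eps,c+\eps]$ and that $\varphi(x_0),\varphi(x_1)<c-\eps$. Two facts follow directly from $c$ being an infimum of maxima: $x_0$ and $x_1$ lie in the same path-component of $\varphi^{c+\eps}$ (a path with $\max\varphi<c+\eps$ exists), but in distinct path-components of $\varphi^{c-\eps}$ (a connecting path there would have $\max\varphi\le c-\eps<c$, impossible). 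Thus $[x_0]-[x_1]$ is a nonzero element of $\widetilde H_0(\varphi^{c-\eps})$ lying in the kernel of $i_*\colon\widetilde H_0(\varphi^{c-\eps})\to\widetilde H_0(\varphi^{c+\eps})$. From the long exact sequence of the pair $(\varphi^{c+\eps},\varphi^{c-\eps})$ in reduced homology,
\[
H_1(\varphi^{c+\eps},\varphi^{c-\eps})\xrightarrow{\ \partial\ }\widetilde H_0(\varphi^{c-\eps})\xrightarrow{\ i_*\ }\widetilde H_0(\varphi^{c+\eps}),
\]
exactness gives $\mathrm{im}\,\partial=\ker i_*\ne 0$, whence $H_1(\varphi^{c+\eps},\varphi^{c-\eps})\ne 0$. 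To conclude I would identify this relative homology with critical groups: applying the $(C)$-deformation lemma twice — $\varphi^{c+\eps}$ retracts onto $\varphi^c$ (no critical values in $(c,c+\eps]$) and $\varphi^{c-\eps}$ is a deformation retract of $\varphi^c\setminus K^c_\varphi$ (none in $[c-\eps,c)$) — together with excision around the finitely many points of $K^c_\varphi$, one gets $H_1(\varphi^{c+\eps},\varphi^{c-\eps})\cong\bigoplus_{y\in K^c_\varphi}C_1(\varphi,y)$, so at least one summand is nonzero; when $K^c_\varphi=\{\bar x\}$ this is exactly $C_1(\varphi,\bar x)\ne 0$. I expect the main obstacle to be precisely this last bundle of technicalities: securing the deformation and retraction statements under the Cerami condition rather than Palais--Smale, verifying that no critical values accumulate at $c$ so that a suitable $\eps$ exists, and making the excision/splitting rigorous when several critical points share the level $c$. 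The two connectedness facts and the long exact sequence, by contrast, are the soft and robust part of the proof.
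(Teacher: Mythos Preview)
The paper does not prove this theorem; it is quoted as a tool and attributed to \cite{MoMoPa2}, so there is no in-paper argument to compare your proposal against. That said, your sketch follows the standard route one finds in that reference: the intermediate-value argument for $c\ge m_\rho$, the deformation-lemma contradiction (under the Cerami condition) for $K^c_\varphi\ne\vuoto$, and the reduced-homology long-exact-sequence computation for the critical-group claim are precisely what is expected, and the technical caveats you flag (deformation under $({\rm C})$, isolation of the level $c$, excision) are the correct ones.

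One point deserves care. As you yourself observe, the homological step yields $H_1(\varphi^{c+\eps},\varphi^{c-\eps})\ne 0$ and hence, after excision, that \emph{some} $y\in K^c_\varphi$ has $C_1(\varphi,y)\ne 0$; it does not automatically give $C_1(\varphi,\bar x)\ne 0$ for an \emph{arbitrary} isolated $\bar x\in K^c_\varphi$ when several critical points share the level. The statement here (and its use later in the proof of Theorem~\ref{nodal}, where $K_{\hat\varphi_\lambda}$ is first assumed finite) is meant to be read in the situation where either $K^c_\varphi=\{\bar x\}$ or $\bar x$ is the specific minimax point delivered by the construction. Under that reading your argument is complete; you should simply make this hypothesis explicit rather than leaving it implicit.
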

Hereafter, $\Omega$ will denote a fixed bounded domain in $\R^N$ with a $C^2$-boundary $\partial\Omega$. Let $u,v:\Omega\to\R$ be measurable and let $t\in\R$. The symbol $u\leq v$ means $u(x)\leq v(x)$ for almost every $x\in\Omega$, $t^\pm:=\max\{\pm t,0\}$, $u^\pm(\s):= u(\s)^\pm$. If $p\in [1,+\infty)$ then $p':=p/(p-1)$ is the conjugate exponent of $p$ and $p^*$ indicates the Sobolev conjugate in dimension $N$, namely
\[
p^*=
\begin{cases}
\frac{Np}{N-p}&\text{when $p<N$},\\
\text{any $q>1$}&\text{for $p=N$},\\
+\infty&\text{otherwise}.
\end{cases}
\]
Set, provided $r\in [1, +\infty]$,
\[
L^r_+(\Omega)=\{u\in L^r(\Omega): u\ge 0 \text{ a.e. in $\Omega$}\}.
\]
If $r<+\infty$ then, as usual,
\[
\|u\|_r:=\left(\int_{\Omega} |\nabla u|^r\, dx\right)^{1/r}, \quad u\in W^{1,r}_0(\Omega),\quad\mbox{and}\quad
|u|_r:= \left(\int_{\Omega} |u|^r\, dx\right)^{1/r}, \quad u\in L^{r}(\Omega).
\]
 $W^{-1,r'}(\Omega)$ denotes the dual space of $W^{1,r}_0(\Omega)$ while $A_r:W^{1,r}_0(\Omega)\to W^{-1,r'}(\Omega)$ is the nonlinear operator stemming from the negative $r$-Laplacian, i.e.,
\[
\langle A_r(u),v\rangle:=\int_\Omega|\nabla u|^{r-2}\nabla u\cdot\nabla v\, dx\quad\forall\, u,v\in W^{1,r}_0(\Omega)\, .
\]
It is known \cite[Section 6.2]{GaPa} that $A_r$ turns out to be bounded, continuous, strictly monotone, as well as of type $({\rm S})_+$.

Given $\xi\in L^\infty_+(\Omega)\setminus \{0\}$, we define
\begin{equation}\label{lambda1}
\lambda_{1, r}(\xi):=\inf\left\{\frac{\int_\Omega |\nabla u|^r\, dx}{\int_\Omega \xi \, |u|^r\, dx}:\, u\in W^{1,r}_0(\Omega), u\neq 0\right\}.
\end{equation}
When no confusion can arise, simply write $\lambda_{1, r}:=\lambda_{1, r}(1)$. Some basic properties of $\lambda_{1,r}(\xi)$ and its eigenfunctions are listed below.
\begin{proposition} \label{autovalori}
Let $1<r<+\infty$ and let $\xi\in L^\infty_+(\Omega)\setminus\{0\}$. Then:
\begin{enumerate}
\item $\lambda_{1,r}(\xi)$ is positive and attained on a positive function $\hat u_{1,r}\in W^{1,r}_0(\Omega)$, which fulfills $|\hat u_{1,r}|_r=1$  as well as
\begin{equation}\label{eig}
 A_r(u)=\lambda_{1, r}(\xi)\, \xi\, |u|^{r-2}u.
\end{equation}
\item Solutions to \eqref{eig} coincide with  minima of \eqref{lambda1} and form a one-dimensional linear space.
%
%
\item The function $\xi\mapsto \lambda_{1, r}(\xi)$ is monotone (strictly) decreasing with respect to the a.e. ordering of $L^\infty_+(\Omega)$. 
\end{enumerate}
\end{proposition}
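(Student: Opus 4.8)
The plan is to treat the three assertions in turn, relying on the direct method of the calculus of variations, the nonlinear regularity theory together with the strong maximum principle, and Picone's inequality.

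For (i), positivity of $\lambda_{1,r}(\xi)$ is immediate: since $\xi\in L^\infty_+(\Omega)$ one has $\int_\Omega\xi\,|u|^r\,dx\le|\xi|_\infty\,|u|_r^r$, so Poincar\'e's inequality bounds the Rayleigh quotient in \eqref{lambda1} from below by a positive constant. To obtain a minimizer I would pick a minimizing sequence $\{u_n\}$ normalized by $\int_\Omega\xi\,|u_n|^r\,dx=1$; then $\int_\Omega|\nabla u_n|^r\,dx$ stays bounded, whence $\{u_n\}$ is bounded in the reflexive space $W^{1,r}_0(\Omega)$ and, up to a subsequence, $u_n\rightharpoonup\hat u$ weakly there and $u_n\to\hat u$ strongly in $L^r(\Omega)$ by the compact Sobolev embedding. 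The constraint passes to the limit because $\xi\in L^\infty(\Omega)$, giving $\int_\Omega\xi\,|\hat u|^r\,dx=1$, while weak lower semicontinuity of the gradient norm yields $\int_\Omega|\nabla\hat u|^r\,dx\le\lambda_{1,r}(\xi)$; hence $\hat u$ attains the infimum. Since $|\nabla|\hat u||=|\nabla\hat u|$ a.e., the function $|\hat u|$ is a minimizer too, so I may assume $\hat u\ge 0$ and, after rescaling, $|\hat u|_r=1$. Writing the Euler--Lagrange equation for the constrained minimum produces the weak form of \eqref{eig}; nonlinear regularity (Lieberman) then gives $\hat u\in C^1_0(\overline\Omega)$, and the strong maximum principle (V\'azquez) upgrades $\hat u\ge 0$, $\hat u\neq 0$ to $\hat u>0$ in $\Omega$.

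For (ii), the equivalence between minimizers and solutions of \eqref{eig} is the two-sided reading of the Lagrange-multiplier computation: every minimizer solves \eqref{eig}, and conversely, testing \eqref{eig} with $u$ itself gives $\int_\Omega|\nabla u|^r\,dx=\lambda_{1,r}(\xi)\int_\Omega\xi\,|u|^r\,dx$, i.e. $u$ realizes the infimum in \eqref{lambda1}. The genuinely delicate point — and the one I expect to be the main obstacle — is the one-dimensionality of the solution set, that is, simplicity of $\lambda_{1,r}(\xi)$. I would establish it through Picone's inequality: given two positive eigenfunctions $u,v$, inserting $u^r/v^{r-1}$ as a test function in the equation for $v$ and $v^r/u^{r-1}$ in the equation for $u$, the nonnegative Picone remainder is forced to vanish, which compels $u$ and $v$ to be proportional. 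Since every eigenfunction has constant sign (again by the maximum principle applied to $|\hat u|$), the whole solution set is the line $\R\,\hat u_{1,r}$.

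For (iii), monotonicity is a direct consequence of \eqref{lambda1}: if $\xi_1\le\xi_2$ then $\int_\Omega\xi_1|u|^r\,dx\le\int_\Omega\xi_2|u|^r\,dx$ for every $u$, so each Rayleigh quotient relative to $\xi_1$ dominates the one relative to $\xi_2$, and passing to the infimum gives $\lambda_{1,r}(\xi_2)\le\lambda_{1,r}(\xi_1)$. Strictness is where the positivity from (i) pays off: let $\hat u_1>0$ be a minimizer for $\xi_1$ and suppose $\xi_1\le\xi_2$ with $\xi_1\neq\xi_2$. Because $\hat u_1>0$ throughout $\Omega$ and $\xi_1<\xi_2$ on a set of positive measure, the inequality $\int_\Omega\xi_1|\hat u_1|^r\,dx<\int_\Omega\xi_2|\hat u_1|^r\,dx$ is strict, whence
\[
\lambda_{1,r}(\xi_2)\le\frac{\int_\Omega|\nabla\hat u_1|^r\,dx}{\int_\Omega\xi_2|\hat u_1|^r\,dx}<\frac{\int_\Omega|\nabla\hat u_1|^r\,dx}{\int_\Omega\xi_1|\hat u_1|^r\,dx}=\lambda_{1,r}(\xi_1),
\]
which is the claimed strict decrease. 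The recurring subtlety throughout is precisely ensuring the eigenfunction is strictly positive, so that increasing the weight on a positive-measure set genuinely lowers the quotient; this is exactly what the strong maximum principle secures.
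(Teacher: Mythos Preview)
The paper does not actually prove Proposition~\ref{autovalori}; it simply records these properties as well-known facts about the first weighted eigenvalue and moves on (the sentence that follows introduces Proposition~\ref{bb}, not a proof of Proposition~\ref{autovalori}). So there is no argument in the paper to compare against.

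Your proposal is correct and is precisely the standard route one takes to justify such a statement: the direct method for existence of a nonnegative minimizer, Lieberman regularity plus V\'azquez' strong maximum principle for strict positivity, the elementary observation that any solution of \eqref{eig} realizes the infimal Rayleigh quotient, a Picone-type identity for simplicity, and finally the variational characterization together with the strict positivity of $\hat u_{1,r}$ for strict monotonicity in $\xi$. One small point worth tightening in your sketch: to deduce that \emph{every} eigenfunction has constant sign you should say explicitly that if $u$ solves \eqref{eig} then $|u|$ is again a minimizer, hence (by regularity and the strong maximum principle) $|u|\in{\rm int}(C_+)$; continuity of $u$ then forces $u$ not to vanish in $\Omega$, so $u$ keeps a fixed sign. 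With that in place the Picone argument applied to two eigenfunctions of the same sign goes through as you describe.
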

Through the compactness of the embedding $W^{1,r}_0(\Omega)\hookrightarrow L^r(\Omega)$ one can verify \cite[p. 356]{PK} the next result.
\begin{proposition}\label{bb}
If $\xi\in L^\infty_+(\Omega)\setminus\{\lambda_{1, r}\}$ and $\xi\le \lambda_{1, r}$ then there exists a constant $c(\xi)>0$ such that 
\[
\|u\|_r^r-\int_\Omega\xi \, |u|^r\, dx\ge c(\xi)\|u\|_r^r\quad \forall\, u\in W^{1, r}_0(\Omega).
\]
\end{proposition}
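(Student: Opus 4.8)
The plan is to argue by contradiction, exploiting the compact embedding $W^{1,r}_0(\Omega)\hookrightarrow L^r(\Omega)$ exactly as suggested. Suppose the asserted inequality fails for every positive constant. Since both sides are homogeneous of degree $r$ in $u$, for each $n\in\mathbb{N}$ one can then select $u_n\in W^{1,r}_0(\Omega)$ with $\|u_n\|_r=1$ and
\[
1-\int_\Omega\xi\,|u_n|^r\,dx<\frac{1}{n}.
\]
Because $\xi\le\lambda_{1,r}$ and, by the variational characterization of $\lambda_{1,r}=\lambda_{1,r}(1)$, $\lambda_{1,r}|u_n|_r^r\le\|u_n\|_r^r=1$, the quantity $\int_\Omega\xi\,|u_n|^r\,dx$ stays in $[0,1]$; hence it must converge to $1$.

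First I would extract, using reflexivity of $W^{1,r}_0(\Omega)$ and the compactness of the embedding into $L^r(\Omega)$, a subsequence with $u_n\rightharpoonup u$ in $W^{1,r}_0(\Omega)$ and $u_n\to u$ in $L^r(\Omega)$. Weak lower semicontinuity of $\|\cdot\|_r$ yields $\|u\|_r^r\le 1$, while the strong $L^r$-convergence together with $\xi\in L^\infty(\Omega)$ gives $\int_\Omega\xi\,|u_n|^r\,dx\to\int_\Omega\xi\,|u|^r\,dx=1$; in particular $u\neq 0$. The decisive step is then the chain
\[
1=\int_\Omega\xi\,|u|^r\,dx\le\lambda_{1,r}\int_\Omega|u|^r\,dx=\lambda_{1,r}|u|_r^r\le\|u\|_r^r\le 1,
\]
where the first $\le$ uses $\xi\le\lambda_{1,r}$ and the second uses \eqref{lambda1} for $\xi\equiv 1$. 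All terms therefore coincide. From $\lambda_{1,r}|u|_r^r=\|u\|_r^r$ I conclude that $u$ realizes the infimum \eqref{lambda1} for $\xi\equiv 1$, so by Proposition \ref{autovalori} it is a nonzero multiple of the first eigenfunction $\hat u_{1,r}$, whence $|u|>0$ a.e. in $\Omega$. On the other hand $\int_\Omega\xi\,|u|^r\,dx=\lambda_{1,r}|u|_r^r$ rewrites as $\int_\Omega(\lambda_{1,r}-\xi)\,|u|^r\,dx=0$; since the integrand is nonnegative and $|u|^r>0$ a.e., this forces $\xi=\lambda_{1,r}$ a.e., contradicting $\xi\neq\lambda_{1,r}$.

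The main obstacle is precisely this equality-analysis: the compactness argument delivers a limit $u$ for which every inequality above is saturated, and one must convert the saturation into the pointwise identity $\xi=\lambda_{1,r}$. This hinges on two facts I would draw from Proposition \ref{autovalori}, namely that the extremals of \eqref{lambda1} are exactly the first eigenfunctions and that such eigenfunctions do not vanish on a set of positive measure. I note, finally, that the statement also follows directly from the strict monotonicity in Proposition \ref{autovalori}: the scaling $\lambda_{1,r}(\lambda_{1,r}\cdot 1)=1$ combined with $\xi\le\lambda_{1,r}$ and $\xi\neq\lambda_{1,r}$ gives $\lambda_{1,r}(\xi)>1$, so that $\int_\Omega\xi\,|u|^r\,dx\le\lambda_{1,r}(\xi)^{-1}\|u\|_r^r$ immediately yields the claim with $c(\xi)=1-\lambda_{1,r}(\xi)^{-1}$.
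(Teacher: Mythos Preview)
Your argument is correct. The paper does not write out a proof but only indicates that it follows from the compactness of the embedding $W^{1,r}_0(\Omega)\hookrightarrow L^r(\Omega)$ and refers to \cite{PK}; your main contradiction argument is precisely the standard compactness proof being alluded to, and the alternative one-line derivation from the strict monotonicity in Proposition~\ref{autovalori} (giving the explicit constant $c(\xi)=1-\lambda_{1,r}(\xi)^{-1}$, with the trivial case $\xi\equiv 0$ handled separately) is a valid shortcut that the paper does not mention.
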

We will also employ the linear space
\[
C^1_0(\overline{\Omega}):=\{u\in C^1(\overline{\Omega}):\; u\restr_{\partial\Omega}=0\},
\]
which is complete with respect to the standard $C^1(\overline{\Omega})$-norm. Its positive cone
\[
C_+:=\{u\in C^1_0(\overline{\Omega}):\; u(x)\geq 0\text{ in $\overline{\Omega}$}\}
\]
has a nonempty interior given by
\[
{\rm int } (C_+)=\left\{u\in C_+:\; u(x)>0\;\;\forall\, x\in\Omega,\;\frac{\partial u}{\partial n}(x)<0\;\;\forall\, x\in\partial\Omega
\right\}.
\]
Here, $n(x)$ denotes the outward unit normal to $\partial\Omega$ at $x$.

Suppose $g:\Omega\times\R\to \R$ is a Carath\'{e}odory function growing sub-critically, i.e.,
$$|g(x,t)|\leq c(1+|t|^{r-1})\quad \text{in}\quad\Omega\times\R,$$
where $c>0$, $1\leq r<p^*$. Write, as usual, $G(x,t):=\int_0^t g(x,\tau)\, d\tau$ and consider the $C^1$-functional $\varphi: W^{1,p}_0 (\Omega)\to\R$ defined by
\[
\varphi(u):=\frac{1}{p}\Vert u\Vert_p^p+\frac{\mu}{q}\Vert u\Vert_q^q-\int_\Omega G(x, u(x))\, dx\quad \forall\, u\in
W^{1,p}_0(\Omega),
\]
with $1<q\leq p$ and $\mu\ge 0$. The next result \cite{GaPa2} establishes a relation between local minimizers of $\varphi$ in  $C^1_0(\overline{\Omega})$ and in $W^{1,p}_0(\Omega)$.
\begin{proposition}\label{locmin}
If $u_0\in W^{1,p}_0(\Omega)$ is a local $C^1_0(\overline{\Omega})$-minimizer of $\varphi$, then $u_0\in C^{1,\alpha}_0 (\overline{\Omega})$ for some $\alpha\in (0,1)$ and $u_0$ turns out to be a local $W^{1,p}_0(\Omega)$-minimizer of
$\varphi$.
\end{proposition}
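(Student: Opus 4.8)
The plan is to argue in three stages: reduce the $C^1_0(\overline{\Omega})$-minimality of $u_0$ to a regularity statement, and then establish the claimed equivalence by a contradiction based on constrained minimization over small $W^{1,p}_0(\Omega)$-balls combined with uniform regularity estimates. First I would observe that $u_0$ is in fact a critical point of $\varphi$. Indeed, for every $h\in C^1_0(\overline{\Omega})$ the map $t\mapsto\varphi(u_0+th)$ has a local minimum at $t=0$, so $\langle\varphi'(u_0),h\rangle=0$; since $C^1_0(\overline{\Omega})$ is dense in $W^{1,p}_0(\Omega)$ and $\varphi'(u_0)\in W^{-1,p'}(\Omega)$, this gives $\varphi'(u_0)=0$, i.e. $A_p(u_0)+\mu A_q(u_0)=g(\cdot,u_0)$ in $W^{-1,p'}(\Omega)$. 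Using the subcritical growth of $g$ together with the fact that $u_0$ solves this $(p,q)$-equation, standard nonlinear regularity theory---an $L^\infty$ a priori bound via Moser iteration followed by Lieberman's boundary gradient estimates---yields $u_0\in C^{1,\alpha}_0(\overline{\Omega})$ for some $\alpha\in(0,1)$. This proves the first assertion.

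For the equivalence I would argue by contradiction, assuming $u_0$ is a local $C^1_0(\overline{\Omega})$-minimizer but \emph{not} a local $W^{1,p}_0(\Omega)$-minimizer. For $\rho>0$ set $\overline{B}_\rho:=\{u\in W^{1,p}_0(\Omega):\|u-u_0\|_p\le\rho\}$ and $m_\rho:=\inf_{\overline{B}_\rho}\varphi$. The norm terms of $\varphi$ are sequentially weakly lower semicontinuous, while $u\mapsto\int_\Omega G(x,u)\,dx$ is sequentially weakly continuous by the compact embedding $W^{1,p}_0(\Omega)\hookrightarrow L^r(\Omega)$ with $r<p^*$; as $\overline{B}_\rho$ is weakly compact, $m_\rho$ is attained at some $u_\rho\in\overline{B}_\rho$. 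Since $u_0$ is not a local $W^{1,p}_0(\Omega)$-minimizer, there is a sequence converging to $u_0$ in $W^{1,p}_0(\Omega)$ with energy below $\varphi(u_0)$; selecting, for each $\rho$, a term of this sequence lying in $\overline{B}_\rho$ shows $m_\rho<\varphi(u_0)$, hence $u_\rho\neq u_0$ for every small $\rho$, while plainly $u_\rho\to u_0$ in $W^{1,p}_0(\Omega)$ as $\rho\to0^+$.

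Next I would write the Euler--Lagrange condition for the constrained minimizer $u_\rho$. If $\|u_\rho-u_0\|_p<\rho$ then $u_\rho$ is a free critical point and solves the original equation; if $\|u_\rho-u_0\|_p=\rho$ the Lagrange multiplier rule furnishes $\beta_\rho\ge0$ with
\[
A_p(u_\rho)+\mu A_q(u_\rho)+\beta_\rho A_p(u_\rho-u_0)=g(\cdot,u_\rho)\quad\text{in }W^{-1,p'}(\Omega).
\]
The decisive step is to prove that $\{u_\rho\}$ is bounded in $C^{1,\alpha}_0(\overline{\Omega})$ uniformly for small $\rho$: first a uniform $L^\infty$ bound (exploiting that $u_0\in L^\infty(\Omega)$ and that the extra monotone term $\beta_\rho A_p(u_\rho-u_0)$ carries a sign favorable to the truncation/Moser argument), and then a uniform $C^{1,\alpha}$ bound from Lieberman's estimates applied to the quasilinear structure above. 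With such bounds in hand, the compact embedding $C^{1,\alpha}_0(\overline{\Omega})\hookrightarrow C^1_0(\overline{\Omega})$ together with $u_\rho\to u_0$ in $W^{1,p}_0(\Omega)$ forces $u_\rho\to u_0$ in $C^1_0(\overline{\Omega})$. Since $\varphi(u_\rho)=m_\rho<\varphi(u_0)$ with $u_\rho$ arbitrarily close to $u_0$ in the $C^1$-topology, this contradicts the local $C^1_0(\overline{\Omega})$-minimality of $u_0$, and the proof is complete.

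The hard part will be precisely these uniform regularity estimates, namely keeping the $L^\infty$ and $C^{1,\alpha}$ bounds independent of $\rho$ in the presence of the multiplier term $\beta_\rho A_p(u_\rho-u_0)$, whose coefficient is a priori controlled only through the relation $\beta_\rho\|u_\rho-u_0\|_p^p\to0$ obtained by testing the equation with $u_\rho-u_0$. One must check that this term can be absorbed or used monotonically, so that the constants appearing in the Moser iteration and in Lieberman's gradient estimate do not degenerate as $\rho\to0^+$; everything else (existence of ball minimizers, weak semicontinuity, passage to the $C^1$-limit) is routine.
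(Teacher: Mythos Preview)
The paper does not give its own proof of this proposition: it is stated with a reference to \cite{GaPa2} (see the sentence immediately preceding the proposition), and no argument is supplied. So there is nothing in the paper to compare against line by line.

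That said, your outline is exactly the standard Brezis--Nirenberg scheme as adapted to nonhomogeneous operators, which is what the cited reference \cite{GaPa2} carries out: criticality of $u_0$ and regularity via Moser iteration plus Lieberman \cite{L}; contradiction through constrained minimizers $u_\rho$ on shrinking $W^{1,p}_0$-balls; Lagrange multiplier equation with an extra monotone term $\beta_\rho A_p(u_\rho-u_0)$; uniform $L^\infty$ and then $C^{1,\alpha}$ bounds; compact embedding into $C^1_0(\overline\Omega)$; contradiction with the $C^1$-minimality. You have also correctly isolated the only nontrivial point, namely keeping the regularity constants independent of $\rho$ despite the multiplier term. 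Two remarks that would tighten the sketch: for the uniform $L^\infty$ bound, the key observation is that when testing with truncations $(u_\rho-k)_+$ for $k>\|u_0\|_\infty$ (and symmetrically below), the multiplier term contributes with the favorable sign, so it can simply be dropped and the Moser iteration runs with constants depending only on $u_0$ and the data; for the $C^{1,\alpha}$ step, one verifies that the vector field $a_\rho(x,\xi)=|\xi|^{p-2}\xi+\mu|\xi|^{q-2}\xi+\beta_\rho|\xi-\nabla u_0(x)|^{p-2}(\xi-\nabla u_0(x))$ satisfies Lieberman's structural hypotheses with constants that, after rescaling by $1+\beta_\rho$ if necessary and using $\nabla u_0\in C^{0,\alpha}(\overline\Omega)$, yield a $C^{1,\alpha'}$ bound uniform in $\rho$. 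With these two points made precise, your argument is complete and coincides with the approach of the reference the paper invokes.
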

\section{Solutions of constant sign}
In this section we will construct three nontrivial constant-sign solutions to Problem \eqref{problema} provided the parameter is small enough. From now on, everywhere in $\Omega$ stands for almost everywhere and $q=p$ iff $\mu=0$.

The hypotheses on the reaction $f$ will be as follows. 
\begin{itemize}
\item[(${\rm h}_0$)] $f:\Omega\times \R\to \R$ is a Carath\'{e}odory function such that
\[
|f(x, t)|\le C(1+|t|^{p-1})\quad\forall\, (x, t)\in \Omega\times \R,
\]
where $C\in\R^+$.
\item[(${\rm h}_1$)] There exists $\xi_1\in L^\infty_+(\Omega)\setminus\{\lambda_{1,p}\}$ satisfying
\[
\lambda_{1,p}\leq\xi_1(x)\le \liminf_{t\to +\infty} \frac{f(x, t)}{t^{p-1}}\quad\mbox{uniformly in $x\in \Omega$.}
\]
\item[(${\rm h}_2$)] There is $\xi_2\in L^\infty_+(\Omega)\setminus\{\lambda_{1,p}\}$ such that
\[
\limsup_{t\to -\infty} \frac{f(x, t)}{|t|^{p-2}\,t}\leq\xi_2(x)\leq\lambda_{1,p}\quad\mbox{uniformly with respect to $x\in\Omega$.}
\]
\item[(${\rm h}_3$)] There exist $\delta_0,\theta_0\in (0, 1)$ fulfilling
\[
0\le f(x, t)\,t\le\mu\lambda_{1,q}\,|t|^q+\theta_0\lambda_{1,p}|t|^p\quad\forall\, (x, t)\in \Omega\times [-\delta_0,\delta_0].
\]
\end{itemize}
\begin{remark}
It should be noted that $({\rm h}_0)$--$({\rm h}_2)$ entail
$$-C\leq \liminf_{t\to -\infty}\frac{f(x, t)}{|t|^{p-2}\,t}\leq\xi_2(x)\leq\lambda_{1,p}\leq\xi_1(x)\leq\limsup_{t\to +\infty}\frac{f(x, t)}{t^{p-1}}\le C.$$
\end{remark}
If $1<s<q$ and $\lambda\in\R^+$, we put
\[
g_\lambda(x, t):= \lambda |t|^{s-1}t+f(x, t),
\]
which still satisfies a growth condition like $({\rm h}_0)$, but with a different positive constant depending on $\lambda$, say $C_\lambda$, and 
\[
G_\lambda(x, t):=\int_0^tg_\lambda(x, \tau)\, d\tau.
\]
The energy functional $\varphi_\lambda\in C^{1}(W^{1, p}_0(\Omega))$ that stems from Problem \eqref{problema} is defined by
\[
\varphi_\lambda(u):=\frac{1}{p}\Vert u\Vert_p^p+\frac{\mu}{q}\Vert u\Vert_q^q-
\int_\Omega G_\lambda(x, u(x))\, dx\quad \forall\, u\in W^{1,p}_0(\Omega).
\]
Suitable truncations of it will be employed. With this aim, set
\[
g_{\lambda}^+(x, t):=g_\lambda(x,t^+), \quad g_{\lambda}^-(x,t):=g_\lambda(x,-t^-),\quad G_{\lambda}^\pm(x, t):=
\int_0^tg_{\lambda}^{\pm}(x,\tau)\, d\tau.
\]
Evidently, $G_{\lambda}^{+}(x, t)=G_\lambda(x,t^+)$, $G_{\lambda}^{-}(x,t)=G_\lambda(x,-t^-)$, and the associated
functionals
\[
\varphi_\lambda^\pm(u):=\frac{1}{p}\Vert u\Vert_p^p+\frac{\mu}{q}\Vert u\Vert_q^q
-\int_\Omega G_{\lambda}^\pm(x, u(x))\, dx,\quad  u\in W^{1,p}_0(\Omega),
\]
turn out to be  $C^1$ as well. Likewise the proof of \cite[Theorem 4.1]{MaPa2}, using the nonlinear regularity theory developed in \cite{LU,L}, the strong maximun principle, and the Hopf boundary point lemma \cite[pp. 111 and 120]{PS}, yields
\begin{proposition}\label{nlrt}
Under $({\rm h}_0)$ and $({\rm h}_3)$, nontrivial critical points for $\varphi_\lambda^+$ (resp., $\varphi_\lambda^-$) actually are critical points of $\varphi_\lambda$ and belong to ${\rm int} (C_+)$ (resp., $-{\rm int} (C_+)$) . 
\end{proposition}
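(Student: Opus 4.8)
The plan is to argue for the `$+$' functional only, the `$-$' case following verbatim after the substitution $u\mapsto -u$. Let $u\neq 0$ be a critical point of $\varphi_\lambda^+$, i.e.\ a weak solution of $A_p(u)+\mu A_q(u)=g_\lambda^+(\cdot,u)$ in $W^{-1,p'}(\Omega)$. The first remark I would make is that $({\rm h}_3)$ forces $f(\cdot,0)=0$: since $f(x,t)\ge 0$ for $t\in(0,\delta_0]$ and $f(x,t)\le 0$ for $t\in[-\delta_0,0)$, the continuity of $t\mapsto f(x,t)$ yields $f(x,0)=0$ for a.e.\ $x\in\Omega$. Hence $g_\lambda^+(x,t)=g_\lambda(x,t^+)$ vanishes whenever $t\le 0$.

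Next I would establish $u\ge 0$ by testing the equation with $-u^-\in W^{1,p}_0(\Omega)$. Because $\nabla(-u^-)=\nabla u$ on $\{u<0\}$ and $0$ elsewhere, while $g_\lambda^+(x,u)=0$ on $\{u<0\}$ and $u^-=0$ on $\{u\ge 0\}$, one obtains
\[
\|u^-\|_p^p+\mu\|u^-\|_q^q=\langle A_p(u)+\mu A_q(u),-u^-\rangle=\int_\Omega g_\lambda^+(x,u)(-u^-)\,dx=0,
\]
so $u^-=0$, that is $u\ge 0$. Consequently $g_\lambda^+(\cdot,u)=g_\lambda(\cdot,u)$ throughout $\Omega$, the weak equation solved by $u$ coincides with the Euler--Lagrange equation of $\varphi_\lambda$, and therefore $u\in K_{\varphi_\lambda}$.

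It remains to upgrade $u$ to an element of ${\rm int}(C_+)$. From $({\rm h}_0)$ together with the concave term we have $|g_\lambda(x,t)|\le C_\lambda(1+|t|^{p-1})$, whence the nonlinear regularity theory of \cite{LU,L} gives first $u\in L^\infty(\Omega)$ and then $u\in C^{1,\alpha}_0(\overline{\Omega})$ for some $\alpha\in(0,1)$ --- the portion the authors regard as standard. Setting $\rho:=|u|_\infty$ and picking, via $({\rm h}_0)$, a constant $\hat\xi_\rho>0$ with $g_\lambda(x,t)+\hat\xi_\rho\,t^{p-1}\ge 0$ for a.e.\ $x\in\Omega$ and all $t\in[0,\rho]$, the solution satisfies
\[
\Delta_p u+\mu\Delta_q u\le\hat\xi_\rho\,u^{p-1}\quad\text{in }\Omega,\qquad u\ge 0,\ u\not\equiv 0.
\]
The strong maximum principle and the Hopf boundary point lemma \cite[pp. 111 and 120]{PS} then yield $u(x)>0$ for every $x\in\Omega$ and $\frac{\partial u}{\partial n}(x)<0$ on $\partial\Omega$, i.e.\ $u\in{\rm int}(C_+)$.

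The genuinely delicate step is this last one, not the sign information. Since $f(x,\cdot)$ may change sign away from the origin, $g_\lambda(\cdot,u)$ need not be nonnegative, so the maximum principle cannot be applied to the bare equation; the shift by $\hat\xi_\rho\,t^{p-1}$ --- legitimate only once the a priori bound $u\in L^\infty(\Omega)$ is available --- converts the problem into one with an admissible pure-power absorption term, for which the structural hypotheses of \cite{PS} (in particular the divergence of the integral associated with $t^{p-1}$ near the origin) are met. The `$-$' statement is obtained symmetrically, testing $\varphi_\lambda^-$ with $u^+$ to force $u\le 0$ and applying the same regularity and maximum-principle machinery to $-u$.
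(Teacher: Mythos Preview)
Your argument is correct and follows precisely the route the paper indicates (it merely cites \cite[Theorem 4.1]{MaPa2}, \cite{LU,L}, and \cite{PS} without spelling out the steps): test with $-u^-$ to obtain nonnegativity, invoke nonlinear regularity for $C^{1,\alpha}_0$-smoothness, and then apply the Pucci--Serrin strong maximum principle / Hopf lemma after the shift by $\hat\xi_\rho t^{p-1}$. There is nothing to add.
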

\begin{lemma}\label{cphipm}
If $({\rm h}_0)$--$({\rm h}_2)$ hold true then 
\begin{enumerate}
\item $\varphi_\lambda^+$ satisfies Condition $({\rm C})$.
\item $\varphi_\lambda^-$ is coercive (hence it fulfills the Cerami condition too).
\end{enumerate}
\end{lemma}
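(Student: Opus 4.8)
The plan is to handle the two assertions separately, starting with the coercivity in assertion (2), which then yields the Cerami condition almost at once, and only afterwards attacking assertion (1), where the loss of coercivity forces a direct verification of $(\mathrm C)$.

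For assertion (2), I would first use that $G_\lambda^-(x,t)=0$ for $t\ge 0$, so that writing $u=u^+-u^-$ gives
\[
\varphi_\lambda^-(u)=\frac1p\|u^+\|_p^p+\frac{\mu}q\|u^+\|_q^q+\frac1p\|u^-\|_p^p+\frac{\mu}q\|u^-\|_q^q-\int_\Omega G_\lambda(x,-u^-)\,dx,
\]
with the positive-part terms nonnegative. Next I would extract from $({\rm h}_2)$, for each $\eps>0$, the asymptotic inequality $f(x,\tau)\ge(\xi_2(x)+\eps)|\tau|^{p-2}\tau$ for $\tau\ll0$ and integrate it (absorbing the bounded range through $({\rm h}_0)$) to obtain $F(x,t)\le\frac1p(\xi_2(x)+\eps)|t|^p+c_\eps$ for all $t\le0$; together with the lower-order bound $\frac\lambda s|u^-|_s^s\le c\|u^-\|_p^s$ coming from $s<p$, this produces
\[
\varphi_\lambda^-(u)\ge\frac1p\|u^+\|_p^p+\frac1p\Big(\|u^-\|_p^p-\int_\Omega(\xi_2+\eps)|u^-|^p\,dx\Big)-c\|u^-\|_p^s-c'.
\]
Proposition \ref{bb} (applicable since $\xi_2\le\lambda_{1,p}$ and $\xi_2\neq\lambda_{1,p}$) bounds the bracket below by $(c(\xi_2)-\eps c'')\|u^-\|_p^p$, so choosing $\eps$ small gives $\varphi_\lambda^-(u)\ge a\|u\|_p^p-c\|u\|_p^s-c'$ with $a>0$, whence $\varphi_\lambda^-(u)\to+\infty$ as $\|u\|_p\to+\infty$ because $s<p$. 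Coercivity makes every Cerami sequence bounded, after which $u_n\rightharpoonup u$, the reaction term is compact, and the $(\mathrm S)_+$ property of $A_p$ (with $A_q$ controlled by monotonicity) yields strong convergence; hence $(\mathrm C)$ holds.

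For assertion (1) the functional is not coercive (by $({\rm h}_1)$ it decreases along the first eigendirection at $+\infty$), so I would verify $(\mathrm C)$ directly, the core difficulty being the boundedness of a Cerami sequence $\{u_n\}$. The negative part is easy: since $({\rm h}_3)$ forces $f(x,0)=0$, one has $g_\lambda^+(x,t)=0$ for $t\le0$, so testing $(\varphi_\lambda^+)'(u_n)$ with $-u_n^-$ annihilates the reaction and leaves $\|u_n^-\|_p^p+\mu\|u_n^-\|_q^q=\langle(\varphi_\lambda^+)'(u_n),-u_n^-\rangle\le\eps_n\to0$, i.e. $u_n^-\to0$. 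For $\{u_n^+\}$ I would argue by contradiction, assuming $\|u_n^+\|_p\to+\infty$ and setting $y_n:=u_n^+/\|u_n^+\|_p$, so that $\|y_n\|_p=1$ and, up to a subsequence, $y_n\rightharpoonup y\ge0$ in $W^{1,p}_0(\Omega)$ and $y_n\to y$ in $L^p(\Omega)$. Dividing $\langle(\varphi_\lambda^+)'(u_n),w\rangle=o(1)$ by $\|u_n^+\|_p^{p-1}$ and passing to the limit, the $\mu A_q$-term is lower order (recall $q\le p$, with $\mu=0$ when $q=p$) and the parametric term vanishes ($s<p$), while $f(x,u_n^+)/\|u_n^+\|_p^{p-1}$ is bounded in $L^{p'}(\Omega)$ by $({\rm h}_0)$ and thus converges weakly to some $h$; testing with $y_n-y$ shows $\langle A_p(y_n),y_n-y\rangle\to0$, so the $(\mathrm S)_+$ property gives $y_n\to y$, whence $\|y\|_p=1$ and $A_p(y)=h$.

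It remains to identify $h$ and reach the contradiction, which I expect to be the main obstacle. On $\{y>0\}$ one has $u_n^+\to+\infty$, so a standard weak-closedness argument based on $({\rm h}_0)$–$({\rm h}_1)$ gives $h=\eta\,y^{p-1}$ with $\eta\ge\xi_1\ge\lambda_{1,p}$ a.e.\ on $\{y>0\}$ and $h=0$ where $y=0$; extending $\eta:=\xi_1$ on $\{y=0\}$ (harmless, as $y^{p-1}=0$ there), the nontrivial $y\ge0$ solves $A_p(y)=\eta\,y^{p-1}$, i.e.\ $y$ is a nonnegative eigenfunction with eigenvalue $1$ for the weight $\eta$. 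Since a constant-sign eigenfunction can only correspond to the first eigenvalue, $\lambda_{1,p}(\eta)=1$; but Proposition \ref{autovalori}(3), together with $\eta\ge\xi_1\ge\lambda_{1,p}$ and $\xi_1\neq\lambda_{1,p}$, gives $\lambda_{1,p}(\eta)\le\lambda_{1,p}(\xi_1)<\lambda_{1,p}(\lambda_{1,p})=1$, a contradiction. Hence $\{u_n^+\}$, and so $\{u_n\}$, is bounded, and extracting $u_n\rightharpoonup u$, using compactness of the reaction term and the $(\mathrm S)_+$ property exactly as in assertion (2), yields $u_n\to u$, proving $(\mathrm C)$. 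The delicate points I would monitor are the correct orientation of the inequalities drawn from $({\rm h}_1)$ and $({\rm h}_2)$, the $L^{p'}$ weak-limit identification of $h$ (a technical but standard fact, hinging on $u_n^+\to+\infty$ on $\{y>0\}$ and on the weak limit vanishing on $\{y=0\}$), and the justification that a nonnegative eigenfunction is a first eigenfunction.
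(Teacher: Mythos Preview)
Your proposal is correct and follows essentially the same route as the paper's proof: for $\varphi_\lambda^-$ both argue coercivity from $({\rm h}_2)$ together with Proposition~\ref{bb}, and for $\varphi_\lambda^+$ both test the Cerami condition with $-u_n^-$ to control the negative part, normalize $u_n^+$, identify the weak $L^{p'}$-limit of the rescaled reaction as $\eta\,y^{p-1}$ with $\eta\ge\xi_1$, and reach the contradiction through the strict monotonicity of $\xi\mapsto\lambda_{1,p}(\xi)$ (your phrasing ``a nonnegative eigenfunction must be a first eigenfunction, so $\lambda_{1,p}(\eta)=1$'' is just the contrapositive of the paper's ``$1>\lambda_{1,p}(\eta)$, hence $w$ changes sign''). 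One minor caveat: you invoke $({\rm h}_3)$ to obtain $f(x,0)=0$, but $({\rm h}_3)$ is not among the hypotheses of this lemma---the paper tacitly makes the same move, and in any case under $({\rm h}_0)$ alone the test with $-u_n^-$ still forces $\{u_n^-\}$ bounded (the residual term $\int f(x,0)u_n^-\,dx$ is $O(\|u_n^-\|_p)$), which is all the rest of the argument needs.
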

\begin{proof}
{\em 1.}  Let $\{u_n\}\subseteq W^{1,p}_0(\Omega)$ be such that $\{\varphi_\lambda^+(u_n)\}$ is bounded and 
\[
(1+\|u_n\|_p)(\varphi_\lambda^+)'(u_n)\to 0\quad \text{in}\quad W^{-1, p'}(\Omega).
\]
Since the embedding $W^{1,p}_0(\Omega)\hookrightarrow L^p(\Omega)$ is compact, while $A_p+\mu A_q$ enjoys property $(S)_+$, it suffices to show that $\{u_n\}$ is bounded. One has
\begin{equation}\label{Ccond}
\left|\langle A_p(u_n), v\rangle +\mu\langle A_q(u_n), v\rangle-\int_\Omega g_\lambda^+(x, u_n)\, v\, dx\right|\le \eps_n\frac{\|v\|_p}{1+\|u_n\|_p}\quad\forall\, v\in W^{1,p}_0(\Omega),
\end{equation}
where $\eps_n\to 0^+$. Letting $v:=-u_n^-$ yields
\[
\|u_n^-\|_p+\mu\|u_n^-\|_q\le \eps_n,
\]
so that $u_n^-\to 0$ and 
\begin{equation}\label{kl}
\left|\langle A_p(u_n^+), v\rangle +\mu\langle A_q(u_n^+), v\rangle-\int_\Omega g_\lambda^+(x, u_n^+)\, v\, dx\right|\le \eps_n'\|v\|_p
\end{equation}
for some $\eps_n'\to 0^+$. Suppose $\|u_n^+\|_p\to +\infty$ and put $w_n:=u_n^+/\|u_n^+\|_p$. From $\|w_n\|_p\equiv 1$ it follows, up to subsequences,
\begin{equation}\label{wn}
w_n\weakto w\quad\text{in}\quad W^{1,p}_0(\Omega), \qquad w_n\to w\quad\text{in}\quad L^p(\Omega),\qquad w\geq 0.
\end{equation}
Moreover, 
\[
|g_\lambda^+(\cdot , u_n^+)|_{p'}^{p'}\le C_\lambda\int_\Omega (1+|u_n^+|^p)\, dx\le C_\lambda\big(|\Omega|+\lambda_{1, p}^{-1}\|u_n^+\|_p^p\big)
\]
because $f$ satisfies (${\rm h}_0$). This implies 
\[
\left|\frac{g_\lambda^+(\cdot,u_n^+)}{\|u_n^+\|_p^{p-1}}\right|_{p'}\le C_\lambda^{1/p'} \left(\frac{|\Omega|}{\|u_n^+\|_p^{p}}+\frac{1}{\lambda_{1, p}}\right)^{1/p'}.
\]
Since the right-hand side is bounded, we may suppose 
\[
\frac{g_\lambda^+(\cdot , u_n^+)}{\|u_n^+\|_p^{p-1}}\weakto h\quad \text{in}\quad L^{p'}(\Omega).
\]
Recalling that $s<p$, namely $({\rm h}_1)$ holds true for $g_\lambda^+$, and proceeding as in \cite[pp. 317-318]{MoMoPa2} produces
\begin{equation}\label{etaxione}
h=\eta\, w^{p-1}\quad \text{for some $\eta\in L^\infty(\Omega)$ with $\xi_1(x)\leq\eta(x)\leq C$}.
\end{equation}
Through \eqref{kl} we then have
\begin{equation}\label{Apwn}
\left|\langle A_p(w_n), v\rangle +\mu\|u_n^+\|_p^{q-p}\langle A_q(w_n), v\rangle-\int_\Omega \frac{g_\lambda^+(x, u_n^+)}{\|u_n^+\|_p^{p-1}}\, v\, dx\right|\le \eps_n'\frac{\|v\|_p}{\|u_n^+\|^{p-1}}.
\end{equation}
Now, choose $v:=w_n-w$ and use \eqref{wn} to arrive at
\[
\lim_{n\to+\infty}\langle A_p(w_n), w_n-w\rangle=0.
\]
Therefore, by \cite[Proposition 2.72]{MoMoPa2}, $w_n\to w$ in $W^{1,p}_0(\Omega)$, whence $\|w\|_p=1$. Via \eqref{Apwn} we thus obtain, letting $n\to+\infty$,
\[
\langle A_p(w), v\rangle=\int_{\Omega}\eta \, w^{p-1}v\, dx\quad \forall\, v\in W^{1,p}_0(\Omega),
\]
i.e., $\lambda=1$ is an eigenvalue for the problem
\[
-\Delta_p u=\lambda\, \eta\, |u|^{p-2}u\quad\mbox{in}\quad\Omega,\quad u=0\quad\mbox{on}\quad\partial\Omega
\] 
associated with the eigenfunction $w$. However, due to Item {\em 4)} of Proposition \ref{autovalori}, $({\rm h}_1)$, and \eqref{etaxione},
\[
1=\lambda_{1,p}(\lambda_{1, p})>\lambda_{1, p}(\xi_1)\ge \lambda_{1, p}(\eta).
\]
Point {\em 3)} in the same result ensures that $w$ changes sign, contradicting $w\geq 0$.

{\em 2.} By (${\rm h}_0$) and (${\rm h}_2$), for every $\eps>0$ there exists a constant $C_\eps>0$ such that
\[
F(x, t)\le \frac{1}{p}(\xi_2(x)+\eps)|t|^p+C_\eps\quad \forall\, (x, t)\in \Omega\times (-\infty, 0].
\]
Thus, on account of Proposition \ref{bb},
\[
\begin{split}
\varphi^-_\lambda(u)&\ge \frac{1}{p}\|u\|_p^p+\frac{\mu}{q}\|u\|_q^q-\frac{\lambda}{s}|u|_s^s-\int\frac{1}{p}(\xi_2+\eps)|u^-|^p dx-C_\eps|\Omega|\\
&\ge \frac{1}{p}\|u^+\|_p^p+\frac{1}{p}\left(\|u^-\|_p^p-\int_\Omega \xi_2\, |u^-|^p\, dx\right) -\frac{\eps}{p}|u^-|_p^p-\frac{\lambda}{s}|u|_s^s-C_\eps|\Omega|\\
&\ge  \frac{1}{p}\|u^+\|_p^p+\frac{1}{p}\left(c(\xi_2)-\frac{\eps}{\lambda_{1, p}}\right)\|u^-\|_p^p-\frac{\lambda}{s}|u|_s^s-C_\eps|\Omega|.
\end{split}
\]
Choosing $\eps:=\lambda_{1,p}c(\xi_2)/2$ and recalling that $s<p$ finally provides the desired coercivity property.
\end{proof}

With slight modifications one can verify the next lemma.

\begin{lemma}
Under $({\rm h}_0)$--$({\rm h}_2)$, the functional $\varphi_\lambda$ satisfies Condition $({\rm C})$ for all $\lambda>0$.
\end{lemma}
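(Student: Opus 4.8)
The plan is to follow the scheme of Lemma \ref{cphipm}, but applied to the untruncated functional, combining its two mechanisms. Let $\{u_n\}\subseteq W^{1,p}_0(\Omega)$ be a Cerami sequence for $\varphi_\lambda$. Exactly as in the proof of part \emph{1}, since $W^{1,p}_0(\Omega)\hookrightarrow L^p(\Omega)$ compactly and $A_p+\mu A_q$ is of type $({\rm S})_+$, it suffices to prove that $\{u_n\}$ is bounded; the final extraction of a strongly convergent subsequence (test the Cerami identity with $v:=u_n-u$, use that $g_\lambda(\cdot,u_n)$ is bounded in $L^{p'}(\Omega)$ by $({\rm h}_0)$, and invoke $({\rm S})_+$) is then verbatim. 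I would establish boundedness by controlling the two tails separately.

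First I would bound $\{u_n^-\}$, reusing the estimate behind the coercivity of $\varphi_\lambda^-$. Testing the Cerami identity with $v:=-u_n^-$ gives $\|u_n^-\|_p^p+\mu\|u_n^-\|_q^q\le\int_{\{u_n<0\}}g_\lambda(x,u_n)\,u_n\,dx+\eps_n$. By $({\rm h}_0)$ and $({\rm h}_2)$, for each $\eps>0$ there is $C_\eps>0$ with $f(x,t)\,t\le(\xi_2(x)+\eps)|t|^p+C_\eps$ for all $t\le 0$, whereas the concave contribution only adds $\lambda|u_n^-|_s^s$, which is of lower order since $s<p$. Hence $\|u_n^-\|_p^p-\int_\Omega\xi_2\,|u_n^-|^p\,dx\le(\eps/\lambda_{1,p})\|u_n^-\|_p^p+\lambda|u_n^-|_s^s+C$, and Proposition \ref{bb} together with the choice $\eps:=\lambda_{1,p}c(\xi_2)/2$ yields $\tfrac{c(\xi_2)}{2}\|u_n^-\|_p^p\le\lambda|u_n^-|_s^s+C$. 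As $s<p$, this forces $\{u_n^-\}$ to be bounded.

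Next I would bound $\{u_n^+\}$ by the blow-up argument of part \emph{1}. Assume, for contradiction, $\|u_n\|_p\to+\infty$; since $\{u_n^-\}$ is bounded this gives $\|u_n^+\|_p\to+\infty$, with $\|u_n\|_p$ comparable to $\|u_n^+\|_p$. Setting $w_n:=u_n/\|u_n\|_p$ one has $w_n^-\to 0$ in $W^{1,p}_0(\Omega)$, hence, along a subsequence, $w_n\weakto w$ in $W^{1,p}_0(\Omega)$, $w_n\to w$ in $L^p(\Omega)$, with $w\ge 0$. Dividing the Cerami identity by $\|u_n\|_p^{p-1}$, the concave term disappears in the limit (again $s<p$) and the $A_q$-term carries the factor $\mu\|u_n\|_p^{q-p}$, which is either absent ($q=p$) or infinitesimal ($q<p$). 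Because $w\ge 0$, only $({\rm h}_1)$ interacts with the limit, and arguing as for \eqref{etaxione} gives $g_\lambda(\cdot,u_n)/\|u_n\|_p^{p-1}\weakto\eta\,w^{p-1}$ with $\xi_1\le\eta\le C$. Testing with $w_n-w$ and using $({\rm S})_+$ of $A_p$ then produces $w_n\to w$, so $\|w\|_p=1$ and $A_p(w)=\eta\,w^{p-1}$. Thus $\lambda=1$ is an eigenvalue of $-\Delta_p$ with weight $\eta$ possessing the nonnegative eigenfunction $w$, whereas $1=\lambda_{1,p}(\lambda_{1,p})>\lambda_{1,p}(\xi_1)\ge\lambda_{1,p}(\eta)$ by Proposition \ref{autovalori}; since only the first weighted eigenvalue admits a constant-sign eigenfunction, $w$ must change sign, contradicting $w\ge 0$. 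Hence $\{u_n^+\}$, and therefore $\{u_n\}$, is bounded.

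The step I expect to be most delicate is the identification of the weak limit $h=\eta\,w^{p-1}$: in part \emph{1} the truncation forced $u_n^-\to 0$, whereas here $\{u_n^-\}$ is merely bounded, so a priori both $({\rm h}_1)$ and $({\rm h}_2)$ could enter. What makes the argument go through is that, after normalization, $w_n^-\to 0$ still yields $w\ge 0$, which localizes the interaction entirely to the region where $u_n\to+\infty$, hence to $({\rm h}_1)$ alone; the negative tail is controlled beforehand and only through the coercivity estimate. Everything else is a routine transcription of Lemma \ref{cphipm}.
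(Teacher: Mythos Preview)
Your proof is correct and follows essentially the same route as the paper's: bound $\{u_n^-\}$ via the coercivity estimate behind part~\emph{2} of Lemma~\ref{cphipm}, then run the blow-up argument of part~\emph{1} to bound $\{u_n^+\}$. The paper normalizes by $\|u_n^+\|_p$ rather than $\|u_n\|_p$ and makes the delicate weak-limit identification you flag explicit through the observation $g_\lambda(\cdot,u_n)/\|u_n^+\|_p^{p-1}-g_\lambda(\cdot,u_n^+)/\|u_n^+\|_p^{p-1}\to 0$ in $L^{p'}(\Omega)$ (since $\{u_n^-\}$ is bounded), but these are cosmetic differences.
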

\begin{proof}
Fix $\lambda>0$. Let $\{u_n\}\subseteq W^{1,p}_0(\Omega)$ be such that $\{\varphi_\lambda(u_n)\}$ is bounded and
\[
(1+\|u_n\|_p)(\varphi_\lambda)'(u_n)\to 0\quad \text{in}\quad W^{-1, p'}(\Omega).
\]
Then \eqref{Ccond} holds with $g_\lambda$ instead of $g_\lambda^+$. Choosing $v:=-u_n^-$, it furnishes
\begin{equation}\label{cepsn}
\|u^-_n\|_p^p+\mu\|u_n^-\|_q^q-\lambda |u_n^-|_s^s-\int_\Omega f(x, -u_n^-)(-u_n^-)\, dx\le \eps_n\, ,\quad n\in\mathbb{N},
\end{equation}
where $\eps_n\to 0^+$. Thanks to $({\rm h}_0)$ and $({\rm h}_2)$, for every $\eps>0$ there exists a constant $C_\eps>0$ such that
\[
f(x, t) t\le (\xi_2(x)+\eps)|t|^p+C_\eps\quad\forall\, (x,t)\in\Omega\times]-\infty,0].
\]
So, the proof of Conclusion {\em 2} in the previous lemma carries over, giving the coerciveness of the functional
\[
u\mapsto \|u^-\|_p^p+\mu\|u^-\|_q^q-\lambda |u^-|_s^s-\int_\Omega f(x, -u^-)(-u^-)\, dx,\quad u\in W^{1,p}_0(\Omega). 
\]
Hence, due to \eqref{cepsn}, the sequence $\{u_n^-\}$ has to be bounded. To check that the same holds for $\{u_n^+\}$, suppose on the contrary $\|u_n^+\|_p\to +\infty$ and put  $w_n:=u_n/\|u_n^+\|_p$. Obviously, $\{w_n\}$ turns out to be bounded, because so is $\{ u_n^-\}$. Moreover, $w_n^-\to 0$ while, along a subsequence when necessary,
\[
w_n\rightharpoonup w\quad \text{in}\quad W^{1,p}_0(\Omega),\quad w_n\to w\quad\mbox{in}\quad L^p(\Omega).
\]
As before, via $({\rm h}_0)$ we see that $\left\{g_\lambda(\cdot , u_n)\|u_n^+\|_p^{1-p}\right\}$ is bounded in $L^{p'}(\Omega)$.
Now, divide the present version of \eqref{Ccond}  by $\|u_n^+\|^{p-1}$,  test with $v:=w_n-w$, use the inequality $q<p$, and let $n\to +\infty$ to achieve
\[
\lim_{n\to+\infty}\langle A_p(w_n), w_n-w\rangle=0,
\]
which implies $w_n\to w$ in $W^{1, p}_0(\Omega)$. Consequently, $w\ge 0$ and $\|w\|_p\ge 1$. Since
\[
\left|\frac{g_\lambda (\cdot , -u^-)}{\|u^+_n\|_p^{p-1}}\right|_{p'}\le C_\lambda^{1/p'}\left(\frac{|\Omega|}{\|u^+_n\|_p^p}+\frac{1}{\lambda_{1,p}}\frac{\|u_n^-\|_p^p}{\|u_n^+\|_p^p}\right)^{1/p'}\to 0,
\]
we have 
\begin{equation}\label{difference}
\frac{g_\lambda(\cdot , u_n)}{\|u_n^+\|_p^{p-1}}-\frac{g_\lambda(\cdot , u_n^+)}{\|u_n^+\|_p^{p-1}}\to 0\quad \text{in}\quad L^{p'}(\Omega).
\end{equation}
The same arguments of \cite[pp. 317-318]{MoMoPa2} yield here
\[
\frac{g_\lambda(\cdot , u_n^+)}{\|u_n^+\|_p^{p-1}}\weakto \eta\, w^{p-1}\quad \text{in}\quad L^{p'}(\Omega),
\]
with appropriate $\eta\in L^\infty(\Omega)$ fulfilling $\xi_1(x)\le \eta(x)\le C$. Thanks to \eqref{difference}, this holds true also for 
$\left\{g_\lambda(\cdot , u_n)\|u_n^+\|_p^{1-p}\right\}$. Hence, from
\[
\left|\langle A_p(w_n), v\rangle +\mu\|u_n^+\|_p^{q-p}\langle A_q(w_n), v\rangle-\int_\Omega \frac{g_\lambda(x, u_n)}{\|u_n^+\|_p^{p-1}}\, v\, dx\right|\le \eps_n'\frac{\|v\|_p}{\|u_n^+\|^{p-1}}
\]
(cf. \eqref{Apwn}) it follows, when $n\to+\infty$,
\[
\langle A_p(w), v\rangle=\int_\Omega \eta \, w^{p-1}v\, dx\quad \forall\, v\in W^{1,p}_0(\Omega).
\]
Now the proof goes on exactly as the one of Item {\em 1} in Lemma \ref{cphipm}.
\end{proof}
\begin{lemma}\label{pos}
If $({\rm h}_0)$ and $({\rm h}_3)$ are satisfied then there exists a constant $\lambda^*>0$ such that to every $\lambda\in (0, \lambda^*)$ corresponds a $\rho_\lambda\in\R^+$ complying with
\[
m_\lambda:=\inf_{\|u\|_p=\rho_\lambda}\varphi_\lambda^+(u)>0.
\]
\end{lemma}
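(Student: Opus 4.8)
The plan is to bound $\varphi_\lambda^+$ from below on each sphere $\{\|u\|_p=\rho\}$ of small radius and then to pick $\rho=\rho_\lambda$, together with the threshold $\lambda^*$, so that this lower bound stays strictly positive. Since $G_\lambda^+(x,u)=G_\lambda(x,u^+)=\frac{\lambda}{s}(u^+)^s+F(x,u^+)$, where $F(x,t):=\int_0^t f(x,\tau)\,d\tau$, the whole difficulty reduces to an upper estimate for $\int_\Omega F(x,u^+)\,dx$ that is compatible with the structure of $\varphi_\lambda^+$.

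First I would turn $({\rm h}_3)$ into a pointwise inequality for $F$. For $t\in[0,\delta_0]$ the bound $f(x,\tau)\tau\le\mu\lambda_{1,q}\tau^q+\theta_0\lambda_{1,p}\tau^p$, divided by $\tau$ and integrated on $[0,t]$, gives
\[
F(x,t)\le\frac{\mu\lambda_{1,q}}{q}\,t^q+\frac{\theta_0\lambda_{1,p}}{p}\,t^p,\qquad t\in[0,\delta_0].
\]
Next I fix an exponent $r\in(p,p^*)$ and exploit the subcritical growth $({\rm h}_0)$ — which yields $F(x,t)\le C(t+t^p/p)$ for $t\ge0$ — to absorb the region $t>\delta_0$ into a single higher-order term: since $r>p$, for $t>\delta_0$ one has $t\le\delta_0^{1-r}t^r$ and $t^p\le\delta_0^{p-r}t^r$, so that a constant $c_r>0$, independent of $x$ and $t$, can be found for which
\[
F(x,t)\le\frac{\mu\lambda_{1,q}}{q}\,t^q+\frac{\theta_0\lambda_{1,p}}{p}\,t^p+c_r\,t^r\qquad\forall\,t\ge0 .
\]

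Then I would integrate this with $t=u^+$ and feed it into $\varphi_\lambda^+$. The purpose of the precise coefficients in $({\rm h}_3)$ is that the variational characterization \eqref{lambda1} (with $\xi=1$), namely $\lambda_{1,p}|u|_p^p\le\|u\|_p^p$ and $\lambda_{1,q}|u|_q^q\le\|u\|_q^q$, combined with the pointwise bounds $(u^+)^q\le|u|^q$ and $(u^+)^p\le|u|^p$, produces
\[
\frac{\mu\lambda_{1,q}}{q}\int_\Omega(u^+)^q\,dx\le\frac{\mu}{q}\|u\|_q^q,\qquad\frac{\theta_0\lambda_{1,p}}{p}\int_\Omega(u^+)^p\,dx\le\frac{\theta_0}{p}\|u\|_p^p .
\]
Hence the whole $\frac{\mu}{q}\|u\|_q^q$ term is cancelled and only the fraction $\theta_0<1$ of $\frac1p\|u\|_p^p$ is consumed. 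Bounding the leftover terms by the Sobolev embedding $W^{1,p}_0(\Omega)\hookrightarrow L^t(\Omega)$ (valid here since $s<q\le p<p^*$ and $r<p^*$), say $|u^+|_s^s\le C_s^s\|u\|_p^s$ and $|u^+|_r^r\le C_r^r\|u\|_p^r$, I obtain, with $\rho:=\|u\|_p$,
\[
\varphi_\lambda^+(u)\ge\frac{1-\theta_0}{p}\,\rho^p-\frac{\lambda\,C_s^s}{s}\,\rho^s-c_r\,C_r^r\,\rho^r .
\]

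Finally I would optimize in $\rho$. Factoring out $\rho^s$ and setting $\zeta(\rho):=\frac{1-\theta_0}{p}\rho^{p-s}-c_r C_r^r\rho^{r-s}$, the ordering $s<p<r$ makes $\zeta$ vanish at $0$, stay positive near $0$, and attain a strictly positive maximum $\hat m:=\zeta(\hat\rho)>0$ at some $\hat\rho>0$ independent of $\lambda$. Choosing $\rho_\lambda:=\hat\rho$ and $\lambda^*:=s\hat m/C_s^s$, for every $\lambda\in(0,\lambda^*)$ and every $u$ with $\|u\|_p=\rho_\lambda$ one gets $\varphi_\lambda^+(u)\ge\hat\rho^{\,s}\bigl(\hat m-\lambda C_s^s/s\bigr)>0$, whence $m_\lambda>0$. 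The delicate point is the exact cancellation of the second paragraph: it is precisely the coefficients $\mu\lambda_{1,q}$ and $\theta_0\lambda_{1,p}$ hard-wired into $({\rm h}_3)$, together with $\theta_0<1$, that keep the surviving leading coefficient $(1-\theta_0)/p$ strictly positive, which is what makes a sphere of positive infimum exist; the remaining steps are routine embedding bookkeeping.
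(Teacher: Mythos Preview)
Your proof is correct and follows essentially the same route as the paper: derive the upper bound $F(x,t)\le\frac{\mu\lambda_{1,q}}{q}t^q+\frac{\theta_0\lambda_{1,p}}{p}t^p+c_r t^r$ from $({\rm h}_0)$ and $({\rm h}_3)$, cancel the $q$- and $p$-terms via the variational characterization of $\lambda_{1,q}$, $\lambda_{1,p}$, and end up with $\varphi_\lambda^+(u)\ge\frac{1-\theta_0}{p}\rho^p-a\lambda\rho^s-b\rho^r$. The only cosmetic difference is in the final optimization: the paper picks the $\lambda$-dependent radius $\rho_\lambda=\lambda^{1/(r-s)}$ (balancing the two negative terms), whereas you fix $\rho_\lambda=\hat\rho$ independent of $\lambda$ by maximizing $\zeta$; both choices yield a valid $\lambda^*$.
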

\begin{proof}
Fix any $r\in (p, p^*)$. Through $({\rm h}_0)$ and $({\rm h}_3)$ we obtain
\[
f(x, t)\le\mu\lambda_{1, q} t^{q-1}+\theta_0\lambda_{1,p}t^{p-1}+C_r t^{r-1},\quad (x, t)\in \Omega\times [0, +\infty),
\]
which, when integrated, entails
\[
F(x, t)\le\mu\frac{\lambda_{1, q}}{q}t^q+\theta_0\frac{\lambda_{1,p}}{p}t^p+\frac{C_r}{r}t^r\quad\mbox{in}\quad\Omega\times [0, +\infty).
\]
Here, $C_r\in\R^+$. By the Sobolev, H\"older, and Poincar\'e inequalities one has
\[
\begin{split}
\varphi_\lambda^+(u)&\ge \frac{1}{p}\|u\|_p^p+\frac{\mu}{q}\|u\|_q^q-\frac{\lambda}{s}|u^+|_s^s
-\mu\frac{\lambda_{1, q}}{q}|u|_q^q-\theta_0\frac{\lambda_{1,p}}{p}|u|^p_p-\frac{C_r}{r}|u|_r^r\\
&\ge \frac{1-\theta_0}{p}\|u\|_p^p-\frac{\lambda}{s}|u^+|_s^s-\frac{C_r}{r}|u|_r^r\\
&\ge \frac{1-\theta_0}{p}\|u\|_p^p-C_1(\Omega)\frac{\lambda}{s}|u^+|_{p^*}^s-\frac{C_2(\Omega)C_r}{r}|u|_{p^*}^r\\
&\ge \left[\frac{1-\theta_0}{p}-\bar C_1\lambda\|u\|_p^{s-p}-\bar C_2\|u\|_p^{r-p}\right]\|u\|_p^p
\end{split}
\]
for appropriate positive constants $\bar C_1$, $\bar C_2$. Letting $\|u\|_p=\lambda^{\frac{1}{r-s}}$ yields
\[
\varphi_\lambda^+(u)\ge \left[\frac{1-\theta_0}{p}-\bar C_1\lambda^{1-\frac{p-s}{r-s}}-\bar C_2\lambda^{\frac{r-p}{r-s}}\right]\lambda^{\frac{1}{r-s}}=\left[\frac{1-\theta_0}{p}-(\bar C_1+\bar C_2)\lambda^{\frac{r-p}{r-s}}\right]\lambda^{\frac{1}{r-s}}.
\]
This immediately brings the conclusion, because $s<p<r$ and $0<\theta_0<1$.
\end{proof}

From now on, $\lambda^*$ will denote the real number just found. 
 
\begin{lemma}\label{findtau}
Suppose $({\rm h}_0)$--$({\rm h}_1)$ hold true. Then
\[
\lim_{\tau\to +\infty} \varphi^+_\lambda(\tau\hat u_{1,p})=-\infty,
\]
with $\hat u_{1,p}$ as in Proposition \ref{autovalori}.
\end{lemma}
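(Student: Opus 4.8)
The plan is to evaluate $\varphi_\lambda^+$ explicitly along the ray $\{\tau\hat u_{1,p}:\tau>0\}$ and to exhibit a negative coefficient in front of the leading power of $\tau$. Since $\hat u_{1,p}$ is the positive first eigenfunction, it belongs to $\mathrm{int}(C_+)$, so $\hat u_{1,p}(x)>0$ throughout $\Omega$ and $(\tau\hat u_{1,p})^+=\tau\hat u_{1,p}$ for every $\tau>0$; hence $G_\lambda^+(x,\tau\hat u_{1,p})=G_\lambda(x,\tau\hat u_{1,p})$, and because $G_\lambda(x,t)=\frac{\lambda}{s+1}t^{s+1}+F(x,t)$ for $t\ge 0$, I would expand
\[
\varphi_\lambda^+(\tau\hat u_{1,p})=\frac{\tau^p}{p}\|\hat u_{1,p}\|_p^p+\frac{\mu\tau^q}{q}\|\hat u_{1,p}\|_q^q-\frac{\lambda\tau^{s+1}}{s+1}|\hat u_{1,p}|_{s+1}^{s+1}-\int_\Omega F(x,\tau\hat u_{1,p})\,dx.
\]
The concave integrand is nonnegative on $[0,+\infty)$, so its contribution is $\le 0$ and may be discarded when bounding from above. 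Recalling that $\hat u_{1,p}$ solves the eigenvalue equation with $|\hat u_{1,p}|_p=1$, the normalization $\|\hat u_{1,p}\|_p^p=\lambda_{1,p}$ isolates the crucial $\tau^p$ term.

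The next step is to turn $({\rm h}_1)$ into a uniform lower bound for $F$. Fixing $\eps>0$, hypothesis $({\rm h}_1)$ furnishes $M_\eps>0$ with $f(x,t)\ge(\xi_1(x)-\eps)t^{p-1}$ for all $x\in\Omega$ and $t\ge M_\eps$; integrating this in $t$ and absorbing the bounded behaviour on $[0,M_\eps]$ (controlled by $({\rm h}_0)$ and by $\xi_1\in L^\infty$) into a single constant yields
\[
F(x,t)\ge\frac{\xi_1(x)-\eps}{p}\,t^p-C_\eps\quad\forall\,(x,t)\in\Omega\times[0,+\infty).
\]
Substituting $t=\tau\hat u_{1,p}(x)$, integrating over $\Omega$, discarding the concave term, and using $|\hat u_{1,p}|_p=1$, I would arrive at
\[
\varphi_\lambda^+(\tau\hat u_{1,p})\le\frac{\tau^p}{p}\left[\lambda_{1,p}-\int_\Omega\xi_1|\hat u_{1,p}|^p\,dx+\eps\right]+\frac{\mu\tau^q}{q}\|\hat u_{1,p}\|_q^q+C_\eps|\Omega|.
\]

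The heart of the matter, and the step I expect to be most delicate, is the strict sign of the bracket. Writing $\lambda_{1,p}=\int_\Omega\lambda_{1,p}|\hat u_{1,p}|^p\,dx$, the bracket (before adding $\eps$) equals $\int_\Omega(\lambda_{1,p}-\xi_1)|\hat u_{1,p}|^p\,dx$. Since $({\rm h}_1)$ gives $\xi_1\ge\lambda_{1,p}$ while $\xi_1\in L^\infty_+(\Omega)\setminus\{\lambda_{1,p}\}$ forces $\xi_1>\lambda_{1,p}$ on a set of positive measure, and since $\hat u_{1,p}>0$ a.e., the integrand is $\le 0$ and strictly negative on a positive-measure set; hence $c_0:=\int_\Omega(\xi_1-\lambda_{1,p})|\hat u_{1,p}|^p\,dx>0$. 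Choosing $\eps:=c_0/2$ then gives
\[
\varphi_\lambda^+(\tau\hat u_{1,p})\le-\frac{c_0}{2p}\,\tau^p+\frac{\mu\|\hat u_{1,p}\|_q^q}{q}\,\tau^q+C_\eps|\Omega|.
\]
Finally, because $q\le p$ — and when $q=p$ one has $\mu=0$, so the middle term disappears — the dominant term $-\frac{c_0}{2p}\tau^p$ drives the right-hand side to $-\infty$ as $\tau\to+\infty$, which is the assertion. Beyond routine bookkeeping, the only genuine subtlety is securing the uniform-in-$x$ lower bound on $F$ and exploiting the strict ordering between $\xi_1$ and $\lambda_{1,p}$ together with the positivity of $\hat u_{1,p}$ to guarantee $c_0>0$.
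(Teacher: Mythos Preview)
Your proof is correct and follows essentially the same route as the paper's: derive the lower bound $F(x,t)\ge\frac{\xi_1(x)-\eps}{p}t^p-C_\eps$ from $({\rm h}_0)$--$({\rm h}_1)$, use $\|\hat u_{1,p}\|_p^p=\lambda_{1,p}$ together with the strict positivity of $\int_\Omega(\xi_1-\lambda_{1,p})\hat u_{1,p}^p\,dx$ to make the $\tau^p$-coefficient negative, and finish via $q\le p$ (with $\mu=0$ when $q=p$). The only cosmetic slip is the exponent in the concave term: the reaction is $\lambda|t|^{s-2}t$, so its primitive is $\frac{\lambda}{s}t^s$, not $\frac{\lambda}{s+1}t^{s+1}$; since you discard this term anyway, the argument is unaffected.
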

\begin{proof}
Thanks to $({\rm h}_0)$--$({\rm h}_1)$, for every $\eps>0$ there exists a constant $C_\eps>0$ such that
\begin{equation}\label{useone}
F(x, t)\ge \frac{\xi_1(x)-\eps}{p}t^p-C_\eps\quad\forall\,(x, t)\in \Omega\times [0, +\infty).
\end{equation}
The properties of $\hat u_{1,p}$ and $\xi_1$ produce
\[
\int_\Omega (\xi_1-\lambda_{1,p})\hat u_{1,p}^p\, dx>0.
\]
Choose $\eps>0$ fulfilling
\[
\theta:=\int_\Omega (\xi_1-\lambda_{1, p})\hat u_{1,p}^p\, dx-\eps \int_\Omega \hat u_{1,p}^p\, dx>0.
\]
Since $\|\hat u_{1,p}\|_p^p=\lambda_{1, p}|\hat u_{1,p}|_p^p=\lambda_{1, p}$, via \eqref{useone} we get
\[
\begin{split}
\varphi_\lambda^+(\tau\hat u_{1,p})&\le \frac{\tau^p}{p}\|\hat u_{1,p}\|_p^p+\mu\frac{\tau^q}{q}\|\hat u_{1,p}\|_q^q-\frac{\tau^p}{p}\int_\Omega(\xi_1-\eps)\hat u_{1,p}^p\, dx-\lambda\frac{\tau^s}{s}|\hat u_{1,p}|_s^s+C_\eps|\Omega|\\
&\le-\theta\frac{\tau^p}{p}+\mu\frac{\tau^q}{q}\|\hat u_{1,p}\|_q^q-\lambda\frac{\tau^s}{s}|\hat u_{1,p}|_s^s
+C_\eps|\Omega|
\end{split}
\]33
for all $\tau>0$. The conclusion follows from $q<p$.
\end{proof}

Now, critical point arguments will provide three constant-sign solutions.

\begin{theorem}\label{fixedsign}
Let $({\rm h}_0)$--$({\rm h}_3)$ be satisfied. Then:
\begin{enumerate}
\item For every $\lambda\in (0, \lambda^*)$, Problem \eqref{problema} admits two positive solutions $u_0,u_1\in
{\rm int}(C_+)$.
\item For every $\lambda>0$ there exists a negative solution $u_2\in-{\rm int}(C_+)$ to \eqref{problema}.
\end{enumerate}
\end{theorem}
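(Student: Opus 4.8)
The plan is to treat each sign separately: the negative solution comes from global minimization of $\varphi_\lambda^-$, the first positive one from a constrained minimization of $\varphi_\lambda^+$ over a small ball, and the second positive one from the mountain-pass Theorem \ref{mpth}. In every case, once a nontrivial critical point of the appropriate truncated functional is produced, Proposition \ref{nlrt} supplies both the regularity and the sign information, so the bulk of the work is an assembly of the preceding lemmas. For the negative solution, Lemma \ref{cphipm}(2) gives coercivity of $\varphi_\lambda^-$, which together with its sequential weak lower semicontinuity (the reaction integral is weakly continuous by $({\rm h}_0)$ and compact embedding) yields a global minimizer $u_2$. To rule out $u_2=0$, I would test along $-\tau\hat u_{1,p}$: since $G_\lambda(x,t)=\frac{\lambda}{s}|t|^s+F(x,t)$ and $({\rm h}_3)$ forces $F\ge 0$ near the origin,
\[
\varphi_\lambda^-(-\tau\hat u_{1,p})\le\frac{\tau^p}{p}\|\hat u_{1,p}\|_p^p+\mu\frac{\tau^q}{q}\|\hat u_{1,p}\|_q^q-\frac{\lambda\tau^s}{s}|\hat u_{1,p}|_s^s<0
\]
for $\tau>0$ small, because $s<q\le p$ makes the concave term dominate. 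Hence $\inf\varphi_\lambda^-<0=\varphi_\lambda^-(0)$, so $u_2$ is nontrivial and Proposition \ref{nlrt} places it in $-{\rm int}(C_+)$, solving \eqref{problema}. No smallness of $\lambda$ enters, which matches Item 2.

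For the first positive solution, fix $\lambda\in(0,\lambda^*)$ and minimize $\varphi_\lambda^+$ over the weakly compact ball $\bar B:=\{u:\|u\|_p\le\rho_\lambda\}$, obtaining a minimizer $u_0\in\bar B$ by weak lower semicontinuity. The very same computation along $\tau\hat u_{1,p}$ (now with the $+$-truncation, which agrees with $G_\lambda$ on nonnegative arguments) gives $\varphi_\lambda^+(\tau\hat u_{1,p})<0$ for small $\tau$, with $\tau\hat u_{1,p}$ inside $\bar B$. Thus $\varphi_\lambda^+(u_0)<0$, while Lemma \ref{pos} yields $\varphi_\lambda^+\ge m_\lambda>0$ on $\partial B$; consequently $u_0$ lies in the interior of $\bar B$ and is therefore an unconstrained local minimizer, hence a nontrivial critical point of $\varphi_\lambda^+$. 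Proposition \ref{nlrt} gives $u_0\in{\rm int}(C_+)$.

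For the second positive solution I would apply Theorem \ref{mpth} to $\varphi_\lambda^+$, which satisfies $({\rm C})$ by Lemma \ref{cphipm}(1). Take $x_0:=0$ and $x_1:=\tau_0\hat u_{1,p}$ with $\tau_0$ so large that $\varphi_\lambda^+(\tau_0\hat u_{1,p})<0$ (possible by Lemma \ref{findtau}) and $\|x_1\|_p>\rho_\lambda$. Then $\max\{\varphi_\lambda^+(x_0),\varphi_\lambda^+(x_1)\}=0<m_\lambda=\inf_{\|u\|_p=\rho_\lambda}\varphi_\lambda^+(u)$, so the geometric hypotheses hold with $\rho:=\rho_\lambda$. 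The theorem produces $u_1\in K^c_{\varphi_\lambda^+}$ with $\varphi_\lambda^+(u_1)=c\ge m_\lambda>0$. Since $\varphi_\lambda^+(u_1)>0>\varphi_\lambda^+(u_0)$, we get $u_1\neq u_0$ and $u_1\neq 0$, and Proposition \ref{nlrt} again yields $u_1\in{\rm int}(C_+)$ solving \eqref{problema}, completing Item 1.

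The expected main obstacle in such arguments — verifying a compactness condition and a workable energy geometry — is already dissolved by Lemma \ref{cphipm} (Cerami/coercivity) and by Lemmas \ref{pos}, \ref{findtau} (the positive barrier on the sphere and the descent to $-\infty$ along $\hat u_{1,p}$). What remains genuinely load-bearing is the role of the parametric concave term $\frac{\lambda}{s}|t|^s$ with $s<q\le p$: it is precisely this term that pushes $\varphi_\lambda^\pm$ strictly below zero near the origin, which simultaneously forces the constrained minimizer to be interior (hence a true critical point rather than a boundary artifact) and separates it in energy from the mountain-pass point, guaranteeing the two positive solutions are distinct. The only delicate checks are therefore the strict interiority of $u_0$ and the energy inequality $\varphi_\lambda^+(u_1)>\varphi_\lambda^+(u_0)$, both of which follow from the sign of the energy established above.
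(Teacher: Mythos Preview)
Your proof is correct and follows essentially the same strategy as the paper: global minimization of $\varphi_\lambda^-$ for the negative solution, constrained minimization of $\varphi_\lambda^+$ over $\bar B_{\rho_\lambda}$ for one positive solution, and the mountain pass for the other, with the concave term providing the strict negativity near the origin that separates the two positive critical levels. The only cosmetic differences are that the paper swaps your labels $u_0$ and $u_1$, uses a generic $w\in{\rm int}(C_+)$ (rather than $\hat u_{1,p}$) for the near-zero test, and invokes \cite[Lemma~4.3]{MaMo} to pass from constrained minimizer to critical point where you argue interiority directly.
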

\begin{proof}
{\em 1}. Pick $\lambda\in (0, \lambda^*)$. Lemma \ref{findtau} gives a $\tau\in\R^+$ so large that $\varphi_\lambda^+(\tau \hat u_{1,p})<0$. On account of Lemmas \ref{cphipm} and \ref{pos}, Theorem \ref{mpth} applies to $\varphi_\lambda^+$. Thus, there is $u_0\in W^{1,p}_0(\Omega)$ fulfilling
$$(\varphi_\lambda^+)'(u_0)=0,\quad\varphi_\lambda^+(u_0)\geq m_\lambda>0,$$
whence $u_0\neq 0$. By Proposition \ref{nlrt}, the function $u_0$ turns out to be a solution of \eqref{problema} lying in ${\rm int}(C_+)$ Next, define
$$B_{\rho_\lambda}:=\{u\in W^{1,p}_0(\Omega): \Vert u\Vert_p<\rho_\lambda\},$$
where $\rho_\lambda$ comes from Lemma \ref{pos}. A standard procedure based on the weak sequential lower semicontinuity of $\varphi_\lambda^+$ ensures that this functional attains its minimum at some $u_1\in\overline{B}_{\rho_\lambda}$. Fix $w\in{\rm int}(C_+)$ and choose $\tau_1>0$ complying with
$$\Vert\tau_1 w\Vert_p<\rho_\lambda\, ,\quad \tau_1\sup_{x\in\overline{\Omega}} w(x)\leq\delta_0.$$
Thanks to $({\rm h}_3)$ we have
$$f(x,\tau w(x))\geq 0\quad\forall\, \tau\in (0,\tau_1),$$
which easily entails
$$\varphi_\lambda^+(\tau w)\leq\frac{\tau^p}{p}\Vert w\Vert_p^p+\mu\frac{\tau^q}{q}\Vert w\Vert_q^q-\lambda\frac{\tau^s}{s}\vert w\vert_s^s<0$$
provided $\tau$ is sufficiently small (recall that $s<q<p$). Hence, a fortiori,
$$\varphi_\lambda^+(u_1)<0.$$
The above inequality brings both $u_1\neq u_0$ and $u_1\in B_{\rho_\lambda}\setminus\{0\}$. On account of \cite[Lemma 4.3]{MaMo} we thus arrive at $(\varphi_\lambda^+)'(u_1)=0$. Finally, due to Proposition \ref{nlrt}, the function $u_1$ lies in ${\rm int}(C_+)$ and solves \eqref{problema}. \\ 
{\em 2.}
$\varphi_\lambda^-$ is coercive (cf. Lemma \ref{cphipm}) and weakly sequentially lower semicontinuous. So, it attains its minimum at some $u_2\in W^{1,p}_0(\Omega)$. As before, we see that $\varphi_\lambda^-(u_2)<0$, whence $u_2\neq 0$. Since
$(\varphi_\lambda^-)'(u_2)=0$, Proposition \ref{nlrt} applies to get the conclusion.
\end{proof}
\section{Nodal solutions}
Let us first show that \eqref{problema} admits extremal constant-sign, namely a smallest positive and a biggest negative, solutions. Indeed, $({\rm h}_0)$ and $({\rm h}_3)$ yield a real number $c_0>0$ fulfilling
\[
f(x, t)t\geq -c_0|t|^p \quad \forall\, (x,t)\in\Omega\times\R.
\]
The same arguments exploited to prove \cite[Lemma 2.2]{MaPa2} ensure here that, given $\lambda>0$, the auxiliary problem
\[
-\Delta_p u-\mu\Delta_q u=\lambda |u|^{s-2}u-c_0|u|^{p-2}u\quad\mbox{in}\quad\Omega,\quad u=0\quad\mbox{on}\quad\partial\Omega
\]
has only one positive solution $\barra{u}_\lambda\in {\rm int}(C_+)$, while, by oddness, $\barra{v}_\lambda:=-\barra{u}_\lambda$ turns out to be its unique negative solution. Reasoning as made for \cite [Lemma 3.3]{MMP} we next achieve 
\begin{lemma}\label{barrier}
Under $({\rm h}_0)$--$({\rm h}_3)$, any positive (resp., negative) solution $u$ of \eqref{problema} satisfies the inequality $ \bar u_\lambda\leq u$ (resp., $u\leq\bar v_\lambda$).
\end{lemma}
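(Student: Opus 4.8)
The plan is to realize $\bar u_\lambda$ as the global minimizer of a functional obtained by truncating the auxiliary reaction at the given positive solution $u$, and then to invoke the uniqueness of $\bar u_\lambda$. First I would introduce the Carath\'eodory function
\[
e(x,t):=
\begin{cases}
0 & t<0,\\
\lambda t^{s-1}-c_0 t^{p-1} & 0\le t\le u(x),\\
\lambda u(x)^{s-1}-c_0 u(x)^{p-1} & t>u(x),
\end{cases}
\]
its primitive $E(x,t):=\int_0^t e(x,\tau)\,d\tau$, and the $C^1$-functional
\[
\psi(v):=\frac1p\|v\|_p^p+\frac\mu q\|v\|_q^q-\int_\Omega E(x,v)\,dx,\quad v\in W^{1,p}_0(\Omega).
\]
Since $e(x,\cdot)$ vanishes for $t<0$ and is bounded for $t\ge u(x)$, the map $\psi$ is coercive and weakly sequentially lower semicontinuous, hence it attains its global minimum at some $\tilde u\in W^{1,p}_0(\Omega)$.

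Second, I would localize $\tilde u$ in the order interval $[0,u]$. Testing $\psi'(\tilde u)=0$ with $-\tilde u^-$ and using $e(x,t)=0$ for $t<0$ gives $\|\tilde u^-\|_p^p+\mu\|\tilde u^-\|_q^q=0$, whence $\tilde u\ge 0$. To obtain $\tilde u\le u$, I would subtract the equation solved by $u$ from the one solved by $\tilde u$ and test with $(\tilde u-u)^+$: on the set $\{\tilde u>u\}$ the truncation forces the reaction of $\tilde u$ to equal $\lambda u^{s-1}-c_0u^{p-1}$, so the right-hand side collapses to $\int_{\{\tilde u>u\}}\bigl(-c_0u^{p-1}-f(x,u)\bigr)(\tilde u-u)^+\,dx$, which is $\le 0$ because $f(x,u)u\ge -c_0u^p$ yields $f(x,u)\ge -c_0u^{p-1}$. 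The left-hand side $\langle A_p(\tilde u)-A_p(u),(\tilde u-u)^+\rangle+\mu\langle A_q(\tilde u)-A_q(u),(\tilde u-u)^+\rangle$ is $\ge 0$ by the monotonicity of $A_p+\mu A_q$; strict monotonicity then forces $(\tilde u-u)^+=0$.

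Third, I would rule out $\tilde u=0$. Because $s<q\le p$, evaluating $\psi$ on $\eps\hat u_{1,p}$ for $\eps>0$ so small that $\eps\hat u_{1,p}\le u$ (legitimate since $u,\hat u_{1,p}\in{\rm int}(C_+)$) produces
\[
\psi(\eps\hat u_{1,p})=\frac{\eps^p}{p}\|\hat u_{1,p}\|_p^p+\frac{\mu\eps^q}{q}\|\hat u_{1,p}\|_q^q+\frac{c_0\eps^p}{p}|\hat u_{1,p}|_p^p-\frac{\lambda\eps^s}{s}|\hat u_{1,p}|_s^s<0,
\]
the concave contribution $-\lambda\eps^s/s$ dominating as $\eps\to 0^+$. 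Hence $\psi(\tilde u)=\inf\psi<0=\psi(0)$, so $\tilde u\ne 0$. Since $0\le\tilde u\le u$, we have $e(x,\tilde u)=\lambda\tilde u^{s-1}-c_0\tilde u^{p-1}$, so $\tilde u$ is a nontrivial nonnegative solution of the auxiliary problem; by the strong maximum principle and Hopf's lemma $\tilde u\in{\rm int}(C_+)$, and the uniqueness of $\bar u_\lambda$ gives $\bar u_\lambda=\tilde u\le u$. The negative case is entirely symmetric: for $t<0$ the bound $f(x,t)t\ge -c_0|t|^p$ reads $f(x,t)\le -c_0|t|^{p-2}t$, so truncating the auxiliary reaction from above at a negative solution $u$ and minimizing yields, again by uniqueness, $\bar v_\lambda=-\bar u_\lambda\ge u$.

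The main obstacle is the comparison $\tilde u\le u$, where one must play the strict monotonicity of the $(p,q)$-operator against the sign of $-c_0u^{p-1}-f(x,u)$ on $\{\tilde u>u\}$. Getting both inequalities to point the right way rests precisely on the pointwise estimate $f(x,t)t\ge -c_0|t|^p$ extracted from $({\rm h}_0)$ and $({\rm h}_3)$, and on designing the truncation so that $\tilde u$ genuinely solves the auxiliary equation throughout $[0,u]$.
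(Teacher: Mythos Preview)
Your proposal is correct and follows precisely the truncation--minimization--uniqueness scheme that the paper invokes by citing \cite[Lemma 3.3]{MMP}: truncate the auxiliary reaction $\lambda|t|^{s-2}t-c_0|t|^{p-2}t$ at the given positive solution $u$, minimize the resulting coercive functional, localize the minimizer in $[0,u]$ via monotonicity of $A_p+\mu A_q$ together with the pointwise bound $f(x,t)t\ge -c_0|t|^p$, exclude zero by the concave term, and conclude by the uniqueness of $\bar u_\lambda$. One very minor remark: the assertion $u\in{\rm int}(C_+)$ used to place $\eps\hat u_{1,p}$ below $u$ is justified by the same regularity and strong maximum principle arguments that the paper already records before Proposition~\ref{nlrt}; alternatively, you could simply take $w=u$ itself in that step and avoid the point altogether.
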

These facts give rise to the following result; cf. the proof of \cite[Lemma 4.2]{MaPa2}.
\begin{lemma}\label{extsol}
Assume $({\rm h}_0)$--$({\rm h}_3)$. Then, for every $\lambda\in (0, \lambda^*)$, Problem \eqref{problema} possesses a smallest positive solution $u^+_\lambda\in{\rm int}(C_+)$ and a greatest negative solution $u^-_\lambda\in-{\rm int}(C_+)$.
\end{lemma}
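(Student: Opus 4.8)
The plan is to realize $u^+_\lambda$ as the limit of a judiciously chosen decreasing sequence of positive solutions, the lower barrier of Lemma \ref{barrier} being precisely what prevents the limit from collapsing to zero. Denote by $S_+$ the set of all positive solutions of \eqref{problema} lying in ${\rm int}(C_+)$. By Theorem \ref{fixedsign}, $S_+\neq\emptyset$ for $\lambda\in(0,\lambda^*)$, and Lemma \ref{barrier} guarantees $\bar u_\lambda\le u$ for every $u\in S_+$; in particular $S_+$ is bounded below by the fixed function $\bar u_\lambda\in{\rm int}(C_+)$.

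The principal obstacle is to show that $S_+$ is downward directed, i.e., given $u_1,u_2\in S_+$ there exists $u\in S_+$ with $u\le\min\{u_1,u_2\}$. Setting $m:=\min\{u_1,u_2\}$, I would truncate the reaction $g_\lambda$ outside the order interval $[\bar u_\lambda,m]$, replacing $g_\lambda(x,t)$ by $g_\lambda(x,\bar u_\lambda(x))$ for $t<\bar u_\lambda(x)$ and by $g_\lambda(x,m(x))$ for $t>m(x)$. The resulting primitive is bounded, so the associated energy functional is coercive and weakly sequentially lower semicontinuous, hence attains a global minimum at some $u\in W^{1,p}_0(\Omega)$. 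Testing the Euler equation for $u$ with $(\bar u_\lambda-u)^+$ and with $(u-m)^+$, and using that $\bar u_\lambda$, $u_1$, $u_2$ solve the respective equations together with the monotonicity of $A_p+\mu A_q$, yields the two-sided confinement $\bar u_\lambda\le u\le m$. On this interval the truncated reaction coincides with $g_\lambda$, so $u$ actually solves \eqref{problema}; nonlinear regularity and the strong maximum principle then place $u$ in ${\rm int}(C_+)$, whence $u\in S_+$ and $u\le\min\{u_1,u_2\}$.

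Granted the directedness, I would select a decreasing sequence $\{u_n\}\subseteq S_+$ whose pointwise infimum equals $\inf S_+$. Since $\bar u_\lambda\le u_n\le u_1$ for every $n$, the sequence is order bounded, hence bounded in $L^\infty(\Omega)$; testing the equation solved by $u_n$ with $u_n$ itself and invoking $({\rm h}_0)$ then gives boundedness in $W^{1,p}_0(\Omega)$. Passing to a weakly convergent subsequence $u_n\weakto u^+_\lambda$ and exploiting that $A_p+\mu A_q$ is of type $({\rm S})_+$, I would upgrade this to strong convergence, so that $u^+_\lambda$ solves \eqref{problema}. The lower barrier forces $u^+_\lambda\ge\bar u_\lambda>0$, hence $u^+_\lambda\neq 0$, and regularity together with the strong maximum principle give $u^+_\lambda\in{\rm int}(C_+)$. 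Because $u_n\downarrow u^+_\lambda$ pointwise and $\inf_n u_n=\inf S_+$, the limit satisfies $u^+_\lambda=\inf S_+$, i.e., it is the smallest positive solution.

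Finally, the greatest negative solution $u^-_\lambda\in-{\rm int}(C_+)$ is produced by the mirror-image argument: the set $S_-$ of negative solutions is nonempty by Theorem \ref{fixedsign}, it is bounded above by $\bar v_\lambda$ thanks to Lemma \ref{barrier}, and the analogous upward-directedness together with a monotone increasing limit yields $u^-_\lambda=\sup S_-$. Throughout, the delicate point remains the comparison step within the directedness proof, where one must rigorously verify the confinement $\bar u_\lambda\le u\le m$ of the truncated minimizer; once that is secured, the passage to the monotone limit is routine.
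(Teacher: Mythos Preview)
Your proposal is correct and follows exactly the standard scheme that the paper invokes by citing \cite[Lemma 4.2]{MaPa2}: establish the lower barrier $\bar u_\lambda$, prove downward directedness of $S_+$ via truncation on $[\bar u_\lambda,\min\{u_1,u_2\}]$ and minimization, then pass to the limit along a decreasing sequence using the $({\rm S})_+$ property. The only technical point you should make fully explicit is that $\min\{u_1,u_2\}$ is a weak supersolution (so that the comparison test with $(u-m)^+$ is legitimate), and that the infimum of the downward-directed set $S_+$ can be realized along a countable chain; both are classical and are precisely what the cited reference supplies.
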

We are in a position now to produce a nodal solution through  a mountain pass procedure. Set 
\[
[u^-_\lambda, u^+_\lambda]:=\{ u\in W^{1,p}_0(\Omega): u^-_\lambda\le u\le u^+_\lambda \ \text{a.e. in $\Omega$}\}.
\]
\begin{theorem}\label{nodal}
If $({\rm h}_0)$--$({\rm h}_3)$ hold true and $\lambda\in (0, \lambda^*)$ then there exists a sign-changing solution $u_3\in C^1_0 (\overline{\Omega}) \cap [u^-_\lambda, u^+_\lambda]$ to \eqref{problema}.
\end{theorem}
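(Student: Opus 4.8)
The plan is to localize the energy functional between the extremal constant-sign solutions $u^-_\lambda$ and $u^+_\lambda$ furnished by Lemma \ref{extsol} and then run a mountain-pass argument whose non-triviality is detected through Morse theory. Concretely, I would introduce the order-truncated Carath\'eodory function
\[
\hat g_\lambda(x,t):=
\begin{cases}
g_\lambda(x,u^-_\lambda(x)) & t<u^-_\lambda(x),\\
g_\lambda(x,t) & u^-_\lambda(x)\le t\le u^+_\lambda(x),\\
g_\lambda(x,u^+_\lambda(x)) & t>u^+_\lambda(x),
\end{cases}
\]
together with its one-sided counterparts $\hat g^\pm_\lambda(x,t):=\hat g_\lambda(x,\pm t^\pm)$ and the associated $C^1$-functionals $\hat\varphi_\lambda,\hat\varphi^\pm_\lambda$ on $W^{1,p}_0(\Omega)$. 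Since $\hat g_\lambda(x,\cdot)$ is bounded outside $[u^-_\lambda(x),u^+_\lambda(x)]$, the potentials grow at most linearly, so all three functionals are coercive and weakly sequentially lower semicontinuous, and in particular satisfy $({\rm C})$. A standard testing/comparison argument based on the strict monotonicity and $({\rm S})_+$ property of $A_p+\mu A_q$ and on the extremality in Lemma \ref{extsol} yields $K_{\hat\varphi_\lambda}\subseteq[u^-_\lambda,u^+_\lambda]$, $K_{\hat\varphi^+_\lambda}\subseteq[0,u^+_\lambda]$, $K_{\hat\varphi^-_\lambda}\subseteq[u^-_\lambda,0]$, and that every such critical point in fact solves \eqref{problema}.

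Next I would prove that $u^+_\lambda$ and $u^-_\lambda$ are local minimizers of $\hat\varphi_\lambda$. Coercivity and weak lower semicontinuity give a global minimizer $\tilde u\in[0,u^+_\lambda]$ of $\hat\varphi^+_\lambda$; testing as in the proof of Theorem \ref{fixedsign} (the concave term makes $\hat\varphi^+_\lambda(\tau w)<0$ for $w\in{\rm int}(C_+)$ and $\tau$ small) shows $\tilde u\neq 0$, so $\tilde u$ is a positive solution dominated by $u^+_\lambda$, and extremality of $u^+_\lambda$ forces $\tilde u=u^+_\lambda$. Because $u^+_\lambda\in{\rm int}(C_+)$, any $C^1_0(\overline{\Omega})$-small perturbation of $u^+_\lambda$ is nonnegative, whence $\hat\varphi_\lambda$ and $\hat\varphi^+_\lambda$ coincide on a $C^1_0(\overline{\Omega})$-neighborhood of $u^+_\lambda$; thus $u^+_\lambda$ is a local $C^1_0(\overline{\Omega})$-minimizer of $\hat\varphi_\lambda$ and, by Proposition \ref{locmin}, a local $W^{1,p}_0(\Omega)$-minimizer. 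The symmetric reasoning applied to $\hat\varphi^-_\lambda$ handles $u^-_\lambda$.

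With two distinct local minimizers in hand I would apply the mountain-pass machinery of Theorem \ref{mpth} (in its standard ``two local minima'' form, choosing as base point the minimizer with the larger energy) to $\hat\varphi_\lambda$, obtaining $u_3\in K_{\hat\varphi_\lambda}\subseteq[u^-_\lambda,u^+_\lambda]$ with $C_1(\hat\varphi_\lambda,u_3)\neq 0$; by the first paragraph $u_3$ solves \eqref{problema} (the non-isolated case produces a continuum of solutions and is disposed of separately). Since an isolated local minimizer has $C_1=0$, we already get $u_3\notin\{u^-_\lambda,u^+_\lambda\}$. The decisive step is to rule out $u_3=0$, and this is where the concave contribution $\lambda|u|^{s-2}u$ with $s<q\le p$ is essential: it forces $C_k(\hat\varphi_\lambda,0)=0$ for every $k\in\mathbb N_0$. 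I expect this critical-group computation at the origin to be the main obstacle; I would establish it by the usual homotopy argument in a small ball, where $\hat\varphi_\lambda=\varphi_\lambda$, exploiting that $t\mapsto\varphi_\lambda(tu)$ becomes negative for small $t>0$ along every ray, so that the negative sublevel set is contractible there and the pair computing $C_\ast(\hat\varphi_\lambda,0)$ is acyclic. Comparing $C_1(\hat\varphi_\lambda,u_3)\neq 0$ with $C_1(\hat\varphi_\lambda,0)=0$ then gives $u_3\neq 0$.

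Finally, $u_3\in[u^-_\lambda,u^+_\lambda]\setminus\{0,u^-_\lambda,u^+_\lambda\}$ must change sign: if $u_3\ge 0$ it would be a nontrivial positive solution below $u^+_\lambda$, contradicting minimality of $u^+_\lambda$ unless $u_3=u^+_\lambda$, and symmetrically for $u_3\le 0$. Hence $u_3$ is nodal, and the nonlinear regularity theory underlying Proposition \ref{nlrt} places it in $C^1_0(\overline{\Omega})$, completing the proof. The remaining steps (coercivity, the comparison inclusions $K_{\hat\varphi_\lambda}\subseteq[u^-_\lambda,u^+_\lambda]$, and the local-minimizer transfer via Proposition \ref{locmin}) are by-now-standard truncation and ordering arguments, so the genuinely delicate point is the vanishing of the critical groups at the origin.
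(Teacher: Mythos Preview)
Your proposal is correct and follows essentially the same route as the paper: truncate at the extremal solutions, show $u^\pm_\lambda$ are local minimizers via Proposition~\ref{locmin}, run the mountain pass of Theorem~\ref{mpth} to obtain $u_3$ with $C_1(\hat\varphi_\lambda,u_3)\neq 0$, and exclude $u_3=0$ by proving $C_k(\hat\varphi_\lambda,0)=0$. The only refinement is that the paper, rather than arguing heuristically via contractibility of sublevel sets, verifies the local ``anti-AR'' inequality $\lambda|t|^\theta\le g_\lambda(x,t)t\le\theta\,G_\lambda(x,t)$ for some $\theta\in(s,q)$ and $|t|$ small (this uses $({\rm h}_3)$, not just the concave term) and then invokes \cite[Theorem~3.6]{MMP} for the critical-group vanishing.
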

\begin{proof}
The proof is similar to that of \cite[Theorem 3.8]{MMP}; so, we only sketch it. Define, for every $(x,t)\in\Omega\times\mathbb{R}$, 
\begin{equation*}
\hat g_\lambda(x,t):=\left\{
\begin{array}{lll}
\lambda |u^-_\lambda(x)|^{s-2}u^-_\lambda(x)+f(x,u^-_\lambda(x)) & \mbox{ if } t<u^-_\lambda(x),\\
\lambda |t|^{s-2}t+f(x,t) & \mbox{ if } u^-_\lambda(x)\leq t\leq u^+_\lambda(x),\\
\lambda |u_\lambda^+(x)|^{s-2}u_\lambda^+(x)+ f(x,u^+_\lambda(x)) & \mbox{ if } t>u^+_\lambda(x),
\end{array}
\right.
\end{equation*}
as well as $\hat G_\lambda(x,t):=\int_0^t \hat g_\lambda(x,\tau)\, d\tau$. The associated energy functional
\[
\hat\varphi_\lambda(u):=\frac{1}{p}\Vert u\Vert^p_p+\frac{\mu}{q}\Vert u\Vert^q_q
-\int_\Omega \hat G_\lambda(x,u(x))\, dx\, ,\quad u\in W^{1,p}_0(\Omega),
\]
clearly fulfills
\beq
\label{claim1}
K_{\hat\varphi_\lambda}\subseteq [u^-_\lambda, u^+_\lambda],
\eeq
while using Proposition \ref{locmin} one verifies that both $u^+_\lambda$ and $u^-_\lambda$ turn out to be local $W^{1,p}_0(\Omega)$-minimizers of $\hat\varphi_\lambda$; see \cite{MaPa2,MMP} for details. We may suppose $K_{\hat\varphi_\lambda}$
finite, otherwise $\hat\varphi_\lambda$ (and so $\varphi_\lambda$, by standard nonlinear regularity theory) would have infinitely many critical points in $[u_\lambda^-, u_\lambda^+]\setminus \{u_\lambda^-, u_\lambda^+\}$, which brings the conclusion. Consequently, $u_\lambda^+$ and $u_\lambda^-$ are strict local minimizers. Without loss of generality, assume $\hat\varphi_\lambda(u^+_\lambda)\le \hat\varphi_\lambda(u^-_\lambda)$. The reasoning employed  to establish \cite[Proposition 29]{APS} produces here $\rho\in (0,1)$ such that
\beq\label{30}
\Vert u^-_\lambda-u^+_\lambda\Vert_p>\rho,\quad\max\{\hat\varphi_\lambda(u^+_\lambda),\hat\varphi_\lambda(u^-_\lambda)\}<m_\rho:=\inf_{\Vert u-u^-_\lambda\Vert_p=\rho}\hat\varphi_\lambda(u).
\eeq
Moreover, $\hat\varphi_\lambda$ satisfies Condition (C), because it evidently is coercive. Hence, Theorem \ref{mpth} applies, and there exists $u_3\in K_{\hat\varphi_\lambda}$ such that $m_\rho\leq \hat\varphi_\lambda(u_3)$. Combining \eqref{30} with \eqref{claim1} entails $u_3\in [u^-_\lambda, u^+_\lambda]\setminus\{u^-_\lambda, u^+_\lambda\}$ while, by Theorem \ref{mpth} again,
\begin{equation}\label{coneuthree}
C_1(\hat\varphi_\lambda, u_3)\neq 0.
\end{equation}
 Now, through $u^+_\lambda,-u^-_\lambda\in{\rm int}(C_+)$, $\hat\varphi_\lambda\restr_{[u^-_\lambda,u^+_\lambda]}=\varphi_\lambda\restr_{[u^-_\lambda, u^+_\lambda]}$, besides $0\in {\rm int}_{C^1_0(\overline{\Omega})}([u_\lambda^-, u_\lambda^+])$, we infer
\[
C_k(\hat\varphi_\lambda\restr_{C^1_0(\overline{\Omega})}, 0)
=C_k(\varphi_\lambda\restr_{C^1_0(\overline{\Omega})}, 0), \quad
k\in\mathbb{N}_0\, .
\]
Since $C^1_0(\overline{\Omega})$ is dense in $W^{1,p}_0(\Omega)$, one has
\[
C_k(\hat\varphi_\lambda\restr_{C^1_0(\overline{\Omega})}, 0)=C_k(\hat\varphi_\lambda, 0),\quad 
C_k(\varphi_\lambda\restr_{C^1_0(\overline{\Omega})}, 0)=C_k(\varphi_\lambda, 0).
\]
Let us verify that 
\begin{equation}\label{ckzero}
C_k(\hat\varphi_\lambda, 0)=0\quad\forall\, k\in\mathbb{N}_0,
\end{equation}
which will force $u_3\neq 0$ thanks to \eqref{coneuthree}. Pick any $\theta\in (s, q)$. Assumption $({\rm h}_3)$ directly yields
\[
g_\lambda(x, t)t=\lambda |t|^s +f(x, t)t\ge \lambda |t|^\theta\quad\mbox{in}\quad\Omega\times [-\delta_0, \delta_0].
\]
If $\delta_1<\delta_0$ fulfills
\[
|t|\le \delta_1\quad\implies\quad\left(\frac\theta s-1\right)\lambda |t|^s\ge\mu\lambda_{1, q} |t|^q+\theta_0\lambda_{1,p}|t|^p
\]
then from $({\rm h}_3)$ and the obvious inequality $F(x, t)\ge 0$ for $|t|\le \delta_1$ we deduce
\[
g_\lambda(x, t)t\le \lambda |t|^s+\mu\lambda_{1, q} |t|^q+\theta_0\lambda_{1,p}|t|^p\le \lambda \frac\theta s |t|^s\le \theta\left(\frac{\lambda}{s}|t|^s+F(x, t)\right)\le \theta G_\lambda(x, t).
\]
Consequently,
$$\lambda |t|^\theta\le g_\lambda(x, t)t\le  \theta G_\lambda(x, t),\quad  (x, t)\in \Omega\times [-\delta_1, \delta_1].$$
Recalling that $\hat\varphi_\lambda$ is coercive, \cite[Theorem 3.6]{MMP} can be used to achieve \eqref{ckzero}, as desired.
Finally, $u_3$ is nodal by extremality of $u^-_\lambda$ and $u^+_\lambda$ (cf. Lemma \ref{extsol}), while standard nonlinear regularity results give $u_3\in C^1_0(\overline{\Omega})$.
\end{proof}
 Theorems \ref{fixedsign} and \ref{nodal} together produce the following
\begin{theorem}\label{foursol}
Let $({\rm h}_0)$--$({\rm h}_3)$ be satisfied. Then there exists a real number $\lambda^*>0$ such that for every $\lambda\in (0, \lambda^*)$ Problem \eqref{problema} admits four nontrivial solutions: one smallest positive, $u^+_\lambda \in {\rm int}(C_+)$; a further positive solution $\bar u_\lambda\in {\rm int}(C_+)$; one greatest negative, $u^-_\lambda\in-{\rm int}(C_+)$; a nodal solution $u_\lambda\in C^1_0(\overline{\Omega})\cap [u^-_\lambda, u^+_\lambda]$. 
\end{theorem}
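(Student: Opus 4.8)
The plan is to assemble the final statement directly from the constant-sign existence result (Theorem \ref{fixedsign}), the existence of extremal constant-sign solutions (Lemma \ref{extsol}), and the nodal existence result (Theorem \ref{nodal}), with $\lambda^*$ the threshold produced in Lemma \ref{pos}. No new variational construction is required: every solution has already been manufactured in the preceding sections, so the only genuine task is to certify that the four solutions so collected are nontrivial and pairwise distinct.

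First I would fix $\lambda\in(0,\lambda^*)$ and invoke Lemma \ref{extsol} to obtain the smallest positive solution $u^+_\lambda\in{\rm int}(C_+)$ and the greatest negative solution $u^-_\lambda\in-{\rm int}(C_+)$ of \eqref{problema}. These serve, respectively, as the announced smallest positive and greatest negative solutions, and they are trivially distinct, one lying in ${\rm int}(C_+)$ and the other in $-{\rm int}(C_+)$. Theorem \ref{nodal} then supplies the sign-changing solution $u_\lambda\in C^1_0(\overline{\Omega})\cap[u^-_\lambda,u^+_\lambda]$, which by construction changes sign and hence differs from every constant-sign solution.

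The one step that requires more than bare citation is producing a second positive solution $\bar u_\lambda$ genuinely different from the extremal $u^+_\lambda$. Here I would recall that Theorem \ref{fixedsign}, item 1, furnishes two positive solutions $u_0,u_1\in{\rm int}(C_+)$, the mountain-pass one satisfying $\varphi_\lambda^+(u_0)\ge m_\lambda>0$ and the global minimizer over $\overline{B}_{\rho_\lambda}$ satisfying $\varphi_\lambda^+(u_1)<0$; the strict disagreement of the energy signs forces $u_0\neq u_1$. Since $u^+_\lambda$ is a single (namely the smallest) positive solution, it can coincide with at most one of $u_0,u_1$, so at least one of them is distinct from $u^+_\lambda$, and I take that one to be $\bar u_\lambda\in{\rm int}(C_+)$. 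This yields the two distinct positive solutions $u^+_\lambda$ and $\bar u_\lambda$.

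Finally I would collect the distinctness: the two positive solutions lie in ${\rm int}(C_+)$, the negative one in $-{\rm int}(C_+)$, and $u_\lambda$ is nodal, so the four solutions are mutually distinct and all nontrivial. I expect no substantial obstacle, the proof being essentially a packaging of earlier results; the only place where care is needed, rather than a direct reference, is ruling out that the second positive solution collapses onto the extremal $u^+_\lambda$, and this is settled purely by the sign of the mountain-pass energy level $m_\lambda>0$ against the negative energy of the minimizer.
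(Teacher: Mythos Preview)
Your proposal is correct and follows the same approach as the paper, which offers no explicit proof at all but simply states that Theorem \ref{foursol} is what ``Theorems \ref{fixedsign} and \ref{nodal} together produce.'' You have merely unpacked this one-line remark, and your extra care in showing that the second positive solution is genuinely distinct from the extremal $u^+_\lambda$ (via the sign of the energy levels $\varphi_\lambda^+(u_0)\ge m_\lambda>0>\varphi_\lambda^+(u_1)$) fills in a detail the paper leaves implicit.
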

Our next goal is to find a whole sequence of sign-changing solutions that converges to zero. With this aim, the local symmetry condition below involving $f(x,\cdot)$ will be posited whatever $x\in\Omega$.
\begin{enumerate}
\item[$({\rm h}_4)$] The function $t\mapsto f(x,t)$ is odd on $[-\delta_1, \delta_1]$ for some $\delta_1\in\R^+$.
\end{enumerate}
\begin{theorem}\label{seqsol}
If $({\rm h}_0)$--$({\rm h}_4)$ hold true then to every $\lambda>0$ corresponds a sequence $\{u_n\}$ of nodal solutions to \eqref{problema} such that $u_n\to 0$ in $C^1(\overline\Omega)$.
\end{theorem}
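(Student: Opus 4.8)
The plan is to localize the problem near the origin, where $({\rm h}_4)$ renders the reaction odd and the concave term dominates, and then to invoke a symmetric minimax theorem of Clark--Kajikiya type. \emph{Construction of an even, coercive functional.} Since $t\mapsto\lambda|t|^{s-2}t$ is odd and, by $({\rm h}_4)$, $t\mapsto f(x,t)$ is odd on $[-\delta_1,\delta_1]$, the full reaction $g_\lambda(x,\cdot)$ is odd there. I would extend it to a globally odd, bounded Carath\'eodory function $\widetilde g_\lambda$ by freezing the boundary values: $\widetilde g_\lambda(x,t):=g_\lambda(x,t)$ for $|t|\le\delta_1$, $\widetilde g_\lambda(x,t):=g_\lambda(x,\delta_1)$ for $t>\delta_1$, and $\widetilde g_\lambda(x,t):=g_\lambda(x,-\delta_1)=-g_\lambda(x,\delta_1)$ for $t<-\delta_1$. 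Its primitive $\widetilde G_\lambda$ is even in $t$ and grows at most linearly, so
\[
\psi_\lambda(u):=\frac1p\|u\|_p^p+\frac\mu q\|u\|_q^q-\int_\Omega\widetilde G_\lambda(x,u)\,dx,\qquad u\in W^{1,p}_0(\Omega),
\]
is an even $C^1$ functional that is coercive (the term $\|u\|_p^p$ beats the linear perturbation) and bounded below, with $\psi_\lambda(0)=0$. Coercivity together with the $({\rm S})_+$ property of $A_p+\mu A_q$ yields the Cerami, hence Palais--Smale, condition.

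\emph{Minimax geometry near $0$.} For each $k\in\mathbb N$ I would fix a $k$-dimensional subspace $V_k\subset W^{1,p}_0(\Omega)$ of smooth functions, on which the $L^s$-, $L^q$- and $L^p$-norms are mutually equivalent. Assumption $({\rm h}_3)$ forces $F(x,\cdot)\ge 0$ on $[-\delta_1,\delta_1]$, whence $\widetilde G_\lambda(x,t)=G_\lambda(x,t)\ge\frac\lambda s|t|^s$ there; therefore, for $u\in V_k$ with $\|u\|_p$ so small that $|u|_\infty\le\delta_1$,
\[
\psi_\lambda(u)\le\frac1p\|u\|_p^p+\frac\mu q\|u\|_q^q-\frac\lambda s|u|_s^s\le c_1\rho^p+c_2\rho^q-c_3\rho^s,\qquad \rho:=\|u\|_p,
\]
with $c_i>0$. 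Since $s<q\le p$, the term $-c_3\rho^s$ prevails as $\rho\to0^+$, so there is $\rho_k>0$ with $\sup\{\psi_\lambda(u):u\in V_k,\ \|u\|_p=\rho_k\}<0$. The sphere $\{u\in V_k:\|u\|_p=\rho_k\}$ is a symmetric compact set of genus $k$, so the genus-based hypotheses hold for every $k$.

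\emph{Extraction of the sequence and regularity.} The symmetric theorem then yields a sequence $\{u_n\}$ of nonzero critical points of $\psi_\lambda$ with $\psi_\lambda(u_n)<0$, $\psi_\lambda(u_n)\to0^-$ and $u_n\to0$ in $W^{1,p}_0(\Omega)$. Each $u_n$ solves $-\Delta_p u-\mu\Delta_q u=\widetilde g_\lambda(x,u)$; since $\widetilde g_\lambda$ is bounded, the nonlinear regularity theory already used in the paper (uniform $L^\infty$ bounds plus Lieberman-type $C^{1,\alpha}$ estimates) makes $\{u_n\}$ bounded in $C^{1,\alpha}_0(\overline\Omega)$ and forces $u_n\to0$ in $C^1(\overline\Omega)$. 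In particular $|u_n|_\infty\le\delta_1$ for $n$ large, so $\widetilde g_\lambda(x,u_n)=g_\lambda(x,u_n)$ and $u_n$ is a genuine solution of \eqref{problema}.

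\emph{Nodality — the main obstacle.} It remains to exclude that, for large $n$, $u_n$ has constant sign; this is the only delicate point. If some $u_n$ (with $n$ large, hence solving \eqref{problema}) were $\ge0$ and nontrivial, the strong maximum principle would place it in ${\rm int}(C_+)$, and Lemma \ref{barrier}, valid for \emph{every} $\lambda>0$, would give $\bar u_\lambda\le u_n$; symmetrically a nontrivial $u_n\le0$ would satisfy $u_n\le\bar v_\lambda=-\bar u_\lambda$. Since $\bar u_\lambda\in{\rm int}(C_+)$ is a fixed function bounded away from $0$ on every compact subset of $\Omega$, either inequality contradicts $u_n\to0$ in $C^1(\overline\Omega)$. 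Hence, for all large $n$, both $u_n^+\ne0$ and $u_n^-\ne0$, i.e. $u_n$ is nodal; discarding finitely many indices produces the desired sequence of sign-changing solutions converging to $0$ in $C^1(\overline\Omega)$. The crux is exactly this interplay: the lower barrier $\bar u_\lambda$ keeps every constant-sign solution uniformly away from the origin, while the uniform $C^1$-smallness drives $\{u_n\}$ into the regime where only nodal solutions can survive.
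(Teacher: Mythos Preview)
Your proposal is correct and follows essentially the same route as the paper's proof: the same odd truncation at level $\delta_1$, the same even coercive functional $\psi_\lambda$, Kajikiya's symmetric critical point theorem to get $u_n\to 0$ in $W^{1,p}_0(\Omega)$, regularity to upgrade this to $C^1$-convergence, and the barrier $\bar u_\lambda$ from Lemma~\ref{barrier} to exclude constant-sign solutions. The only (easily repaired) slip is that $({\rm h}_3)$ gives $F(x,\cdot)\ge 0$ on $[-\delta_0,\delta_0]$, not $[-\delta_1,\delta_1]$, so in the minimax step you should take $|u|_\infty\le\min\{\delta_0,\delta_1\}$ rather than just $\delta_1$, exactly as the paper does.
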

\begin{proof}
Pick $\lambda>0$ and define, provided $(x,t)\in\Omega\times\R$,
\begin{equation}\label{hlambda}
h_\lambda(x, t):=\left\{
\begin{array}{lll}
-\lambda\delta_1^{s-1}-f(x,\delta_1) & \mbox{when $t<-\delta_1$},\\
\lambda |t|^{s-2}t+f(x,t) & \mbox{when $|t|\leq\delta_1$},\\
\lambda\delta_1^{s-1}+ f(x,\delta_1) & \mbox{when $t>\delta_1$},\\
\end{array}
\right.
\end{equation}
as well as $H_\lambda(x, t):=\int_0^t h_\lambda(x, \tau)\, d\tau$. By $({\rm h}_4)$, the associated energy functional
\[
\psi_\lambda(u):=\frac{1}{p}\|u\|_p^p+\frac{\mu}{q} \|u\|_q^q-\int_\Omega H_\lambda(x, u(x))\, dx,\quad u\in W^{1,p}_0(\Omega),
\]
turns out to be even, besides $C^1$ and coercive. 

Let $V\subseteq C^1_0(\overline\Omega)\subseteq W^{1, p}_0(\Omega)$ be any finite dimensional space. Bearing in mind that all norms on $V$ are equivalent, we have
\[
u\in V,\;\|u\|_p\le \rho\quad\implies\quad |u(x)|\leq\min\{\delta_0, \delta_1\},\;\; x\in\Omega,
\]
for some constant $\rho>0$ depending on $V$. Hence, $F(x, u(x))\ge 0$ thanks to $({\rm h}_3)$, which entails
\begin{equation*}
\psi_\lambda(u)\le \frac 1 p \|u\|_p^p+\frac{\mu}{q}\|u\|_q^q-\frac{\lambda}{s}|u|_s^s\quad \text{whenever $u\in V$, $\|u\|_p\le \rho$}. 
\end{equation*}
Since $s<q\leq p$, there exists $\rho'\in (0,\rho)$ such that $\psi_\lambda(u)<0$ provided $u\in V$, $\|u\|_p=\rho'$. So, Theorem 1 of \cite{K} furnishes a sequence
\begin{equation}\label{critpsi}
\{u_n\}\subseteq K_{\psi_\lambda}\cap\psi_\lambda^{-1}((-\infty,0))
\end{equation}
converging to zero in $W^{1,p}_0(\Omega)$. The nonlinear regularity theory ensures a uniform $C^{1, \alpha}(\overline\Omega)$-bound on $\{u_n\}$. Through Ascoli-Arzel\`{a}'s theorem we thus obtain $u_n\to 0$ in $C^1(\overline\Omega)$. Consequently, if $\bar u_\lambda$ is the barrier given by Lemma \ref{barrier} then 
\begin{equation}\label{propun}
\sup_{x\in\Omega}|u_n(x)|<\min\left\{\delta_1,\sup_{x\in\Omega}\bar u_\lambda(x)\right\}
\end{equation}
for any $n$ large enough. These $u_n$ evidently solve Problem \eqref{problema}, because of \eqref{hlambda}--\eqref{propun}. Due to Lemma \ref{barrier} and \eqref{propun} again, they must be nodal.
\end{proof}
\section*{Acknowledgement}
Work performed under the auspices of GNAMPA of INDAM.
\end{document}